\def\XXint#1#2#3{{\setbox0=\hbox{$#1{#2#3}{\int}$ }
\vcenter{\hbox{$#2#3$ }}\kern-.6\wd0}}
\newtheorem{theorem}{Theorem}[section]
\newtheorem{lemma}[theorem]{Lemma}
\newtheorem{rem}{Remark}[section]
\numberwithin{equation}{section}
\renewcommand{\Im}{\operatorname{Im}}
\newcommand{\R}{\mathbb R}
\newcommand{\T}{\mathbb T}
\numberwithin{equation}{section}
\newcommand\norm[1]{\left\lVert#1\right\rVert}
\title[Well-posedness of periodic fractional NLS]{On the well-posedness of the periodic\\ fractional  Schr\"{o}dinger equation}
\author[B. Sanchez]{Beckett Sanchez}
\address{Department of Mathematics, Brown University, 151 Thayer Street, Providence, RI 02912}
\curraddr{}
\email{beckett\_sanchez@brown.edu}
\author[O. Riaño]{Oscar Riaño}
\address{Departamento de Matem\'aticas, Universidad Nacional de Colombia, Bogot\'a, Colombia. Carrera 30 Calle 45, Ciudad Universitaria, Bogot\'a, Colombia}
\curraddr{}
\email{ogrianoc@unal.edu.co}
\author[S. Roudenko]{Svetlana Roudenko}
\address{Department of Mathematics \& Statistics\\Florida International University,  Miami, FL, USA}
\curraddr{}
\email{sroudenko@fiu.edu}
\subjclass[2020]{35A01, 35B10, 35Q40, 35Q55} 
\keywords{periodic fractional nonlinear Schr\"odinger equation, well-posedness, weighted spaces, combined nonlinearities, logarithmic potential}
\date{}
\begin{document}

\begin{abstract}
We consider the periodic fractional nonlinear Schr\"{o}dinger equation 
$$
iu_t -(-\Delta)^{\frac{s}{2}} u + \mathcal{N}(|u|)u=0, \quad x\in \mathbb{T}^N,\, \, t \in \mathbb R, \, \, s>0,
$$ 
where the nonlinearity term is expressed in two ways: the first one $\mathcal{N}\in C^J(\mathbb R^+)$, whose derivatives have a certain polynomial decay, e.g., $\mathcal{N}(|u|)=\log(|u|)$; the second one is given by a sum of powers, possibly infinite, 
$$
\mathcal{N}(|u|)  = \sum a_k |u|^{\gamma_k}, \quad \gamma_k \in \mathbb{R}, ~~ a_k \in \mathbb{C},
$$
which includes examples such as $\mathcal{N}(|u|) \, u =\frac{u}{|u|^{\gamma}},$ $\gamma>0$. By using standard properties of periodic Sobolev spaces $H^J(\mathbb{T}^N)$, $J>0$, we study the local well-posedness for the Cauchy problems of the above equations when initial data satisfy a non-vanishing condition $\inf\limits_{x\in \mathbb{T}^N}|u_0(x)|>0$. 
 \end{abstract}

\maketitle
\tableofcontents
\section{Introduction}

We consider the Cauchy problem associated to the periodic nonlinear fractional Schr\"odinger equation
\begin{equation}\label{NLS}
    \begin{cases}
        iu_t-(-\Delta)^{\frac{s}{2}} u + \mathcal{N}(|u|)u=0, \quad x\in\T^N, ~t\in\R,\\
        u(0,x)=u_0(x).
    \end{cases}
\end{equation}
where $u: \mathbb{R}\times \mathbb{T}^N\rightarrow \mathbb{C}$, dimension $N\geq 1$, the operator $(-\Delta)^{\frac{s}{2}}$ stands for the fractional Laplacian of order $s>0$, which is defined through the Fourier multiplier with symbol $|k|^{s}$, $k \in \mathbb{Z}^N$, and the potential or nonlinearity $\mathcal{N}(\cdot)$ is given by either of the following cases:
\smallskip

{\bf (I)} $\mathcal{N}(x)\in C^{J}(\mathbb{R}^{+})$ with $|\mathcal{N}^{(j)}(x)| \leq c_j |x|^{\gamma-j}$ for  some  $\gamma\in \mathbb{R}$, $c_j>0$, and each $j=1,\dots,J$.  

\qquad (Here, $\mathcal{N}^{(j)}(\cdot)=\frac{d^j \mathcal{N}}{dx^j}(\cdot)$ denotes the $j$th derivative of the function $\mathcal{N}$).

{\bf (II)} For $x \geq 0$, let $\mathcal{N}(x) = \sum \limits_{k=0}^{\infty}a_k x^{\gamma_k}, ~~a_k\in \mathbb{C}, \gamma_k\in \mathbb{R}$.

Setting $s=2$ in \eqref{NLS}, one recognizes the widely studied nonlinear Schr\"odinger (NLS) equation, which appears in various physical contexts such as propagation of laser beams, Bose-Einstein condensation, chemical phenomena, self-trapped beams in plasma, etc., see for instance \cite{Z1967, PhysRevE1997, SulemSulem1999, FP2000, MA1981, LamLippmanTappert1977} and numerous references therein.  The fundamental contributions in the periodic setting were started by Bourgain in \cite{B1993} (see review \cite{Gi1996}), and further developed in follow-up works such as \cite{S1997, Tao2006, EZ2008, IP2014, EGT2019} and many others. Fractional Schr\"odinger operators such as in \eqref{NLS} appear in  \cite{Laskin2000, Laskin2002} and rigorously derived as the continuum limit of some discrete physical systems with long-range lattice interactions in \cite{KLS2013}. They also appear in models for charge transport in biopolymers like the DNA, e.g. in \cite{DNA1999}, and other applications, and have been studied with various methods, e.g., see for example \cite{EZ2008,IP2014,HS2021}. Logarithmic nonlinearity in the NLS equation was introduced in \cite{bm1976} as a possible model in nonlinear wave mechanics, and was studied from the mathematical point of view (on a whole space), for example,  in \cite{CH1980, Caz1983, dAMS2014, Carles2018}. The second type of nonlinearity we consider resembles combined nonlinearities (at least when the powers are positive), such models have been studied, for example, in \cite{B1998, M2019, TVZ2007, CS2021, BFG2023} and references therein. In this paper, we look at the periodic setting of such potentials (and further their generalization such as allowing negative powers in the combined nonlinearities, or having infinitely many terms) and address the well-posedness theory from a point of view that does not rely on Strichartz estimates.

Concerning the well-posedness in $H^{J}(\mathbb{T}^N)$, in the case of the NLS equation \eqref{NLS} with $s=2$ and $\mathcal{N}(|u|)u= \lambda \,|u|^{\gamma}u$, $\gamma> 0$, $\lambda \in \{-1,1\}$,  we refer to 
\cite{B1993,Gi1996, KenigPonceVega1996, S1997, 
M2009, LinaresPonce2015} and references therein. 
The local and global well-posedness for the fractional Cauchy problem \eqref{NLS} with $\mathcal{N}(|u|)u=\lambda \, |u|^{\gamma}u$, $\gamma> 0$, $\lambda \in \{-1,1\}$ has been investigated, for instance, in \cite{CHKL2015, DET2016,GuoBolingYong2008,Thirouin2017, EGT2019}. 

The goal of this paper is to prove the existence and uniqueness (and continuous dependence) of solutions to the initial value problem \eqref{NLS} for {\it any} dispersion $s>0$, and a large class of nonlinearities, which includes some {\it analytic} functions (such as exponential, trigonometric, polynomial) and functions that are not smooth at the origin (e.g., logarithmic functions or negative powers). The main idea in studying 
such nonlinearities consists in obtaining solutions to \eqref{NLS} generated by non-vanishing initial conditions, i.e., $u_0\in H^J(\mathbb{T}^N)$, for which $\inf\limits_{x\in \mathbb{T}^N}|u_0(x)|>0$. Our motivation comes from the work of Cazenave and Naumkin \cite{CazNaum2016}, who studied the Cauchy problem for the NLS equation on the whole space
\begin{equation}\label{RnSchrodinger}
    \begin{cases}
        iu_t+\Delta u + |u|^{\gamma}u=0, \quad x\in\R^N, \, \, t\in\R, \, \gamma > 0, \\
        u(0,x)=u_0(x).
    \end{cases}
\end{equation}
For a certain class of non-vanishing initial conditions $u_0\in H^J(\mathbb{R}^N)$, in \cite{CazNaum2016}, it was shown the local existence of solutions of \eqref{RnSchrodinger} for an arbitrary $\gamma>0$, as well as the global existence and scattering when $\gamma>\frac{2}{N}$. The idea was based on considering a class of initial data such that
\begin{equation}\label{infcondRn}
\eta \inf_{x\in \mathbb{R}^N} |\langle x \rangle^ru_0(x)|\geq 1, 
\end{equation}
for some $\eta>0$, $r \in \mathbb N$, where $\langle x \rangle=(1+|x|^2)^{\frac{1}{2}}$, for which it was shown that there exists a time $T>0$ and a unique solution $u(t,x)$ of \eqref{RnSchrodinger} such that
\begin{equation*}
\eta \big(\inf_{t\in [-T,T], \, x\in \mathbb{R}^N  } |\langle x \rangle^ru(t,x)|\big)\geq c
\end{equation*}
for some $0<c<1$, 
which equivalently, can be written as 
\begin{equation*}
\inf_{t\in [-T,T], \, x\in \mathbb{R}^N  } |\langle x \rangle^ru(t,x)| > 0.
\end{equation*}

The above condition is convenient to deal with powers $\gamma>0$, where the nonlinearity $|u|^{\gamma}u$ in \eqref{RnSchrodinger} is not sufficiently regular at the origin (e.g., for $0<\gamma < 1$) to apply standard well-posedness techniques. In this regard, this method and the condition \eqref{infcondRn} have been used to study the well-posedness for different models with nonlinearities of low regularity, e.g., in the NLS-type equations \cite{AroraRianoRoudenko2022,RRR,CazHauNaum2020,CazNaum2018,LinaresPonceGleison2019,LinaresPonceGleison2019II}, and in the KdV-type equations \cite{FRRSY,LinaresMiyazakiPonce2019,Miyazaki2020}.

A natural analog of \eqref{infcondRn} for periodic functions  is given by
\begin{equation}\label{nonvanish}
\eta \inf_{x\in\T^N}|u_0(x)| \ge 1 
\end{equation}
for some $\eta>0$.

This paper sets out to investigate a local theory for solutions of \eqref{NLS} with different types of dispersion and nonlinearities
in the class determined by initial conditions $u_0\in H^J(\mathbb{T}^N)$ such that \eqref{nonvanish} holds. We note that in the periodic setting, the condition \eqref{nonvanish} is enough to show the existence of solutions in $H^J(\T^N)$ spaces without a polynomial  weight, as needed in the case of 
$\R^N$, e.g., see \cite{CazNaum2016, RRR, FRRSY}. As a consequence, when $s=2$, we consider a less restricted resolution space for the Cauchy problem \eqref{NLS}, unlike the weighted Sobolev space in \cite{CazNaum2016}. Moreover, restricting to the periodic case $\T^N$ allows for generalizations in nonlinearity, most notably $\frac{u}{|u|^{\gamma}}$, $\gamma>0$, as well as $\log (|u|)$. 

Regarding some invariants, the equation in \eqref{NLS} formally conserves the mass
\begin{equation*}
	M[u(t,x)] = \int_{\mathbb{T}^N}|u(t,x)|^2\, dx
\end{equation*}
and the energy
\begin{equation*}
	E[u(t,x)] = \frac{1}{2}\int_{\mathbb{T}^N}|(-\Delta)^{\frac{s}{4}} u(t,x)|^2\, dx -\int_{\mathbb{T}^N}\mathbf{N}(|u(t,x)|)\, dx,
\end{equation*}
where $\mathbf{N}(\tau)=\int_0^{\tau} \mathcal{N}(|\nu|)\nu\, d\nu$.
\smallskip

We now state our findings. Our first result establishes well-posedness for a general type of nonlinearities (I) determined by a function $\mathcal{N}\in C^{J}(\mathbb{R}^{+})$.

\begin{theorem}\label{mainTHM2}
Let $s>0$ and $J\in \mathbb{Z}^{+}$ be such that  $J>\frac{N}{2}+s$. Consider the Cauchy problem \eqref{NLS}, where $\mathcal{N}(\cdot)$ satisfies the following conditions:
\begin{itemize}
    \item[(i)] $\mathcal{N}(x)\in C^J(\mathbb{R}^{+})$,
    \item[(ii)] there exists $\gamma\in \mathbb{R}$ such that for all integer $n \in \{1,\dots, J \}$ 
    there exists a constant $c_n>0$, for which
    \begin{equation}\label{decayhypho}
       |\mathcal{N}^{(n)}(x)|\leq c_n|x|^{\gamma-n}, 
    \end{equation}
    for all $x>0$.
\end{itemize}
Let $u_0\in H^J(\T^N)$ be such that there exists an $\eta>0$, for which \eqref{nonvanish} holds. Then, there exist a time $T>0$ and a unique solution $u\in C([-T,T];H^J(\mathbb{T}^N))$ of \eqref{NLS} with initial condition $u_0$ such that
\begin{equation*}
    \inf_{t\in [-T,T], \, x\in \mathbb{T}^N}|u(t,x)|>0.
\end{equation*}
Moreover, the map $u_0 \mapsto u(t,\cdot)$ is continuous in the following sense: for any $0<\widetilde{T}<T$, there exists a neighborhood $V$ of $u_0$ in $H^J(\mathbb{T}^N)$, for which \eqref{nonvanish} holds,  and such that the map data-to-solution is Lipschitz continuous from $V$ into the class $C([-\widetilde{T},\widetilde{T}],H^{J}(\mathbb{T}^N))$.
\end{theorem}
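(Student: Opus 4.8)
The plan is to recast \eqref{NLS} as an integral equation via Duhamel's formula,
\[
u(t) = S(t)u_0 - i\int_0^t S(t-t')\,\mathcal{N}(|u(t')|)u(t')\,dt',
\]
where $S(t)=e^{-it(-\Delta)^{s/2}}$ is the (unitary) linear propagator on $H^J(\T^N)$, and to solve it by a contraction mapping argument on a suitable closed ball in $C([-T,T];H^J(\T^N))$ intersected with the non-vanishing set. The key observation is that the condition $J>\frac N2+s$ makes $H^J(\T^N)$ a Banach algebra (and embeds into $C(\T^N)$), so that products and compositions can be estimated in $H^J$; moreover $S(t)$ is an isometry on every $H^\sigma(\T^N)$, so no smoothing or Strichartz input is needed. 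The non-vanishing hypothesis \eqref{nonvanish} is what lets us make sense of $\mathcal{N}(|u|)$ even when $\mathcal{N}$ is singular at the origin: as long as $|u(t,x)|$ stays bounded below by a fixed constant $\delta>0$, the map $z\mapsto \mathcal{N}(|z|)$ is $C^J$ on the relevant range, and hypothesis (ii) is irrelevant there — it is only used to control constants uniformly. I expect this last point to be stated as an auxiliary lemma (a ``composition/Moser-type'' estimate): if $v\in H^J(\T^N)$ with $\inf_x|v(x)|\ge\delta$ and $\|v\|_{H^J}\le R$, then $\mathcal{N}(|v|)v\in H^J(\T^N)$ with $\|\mathcal{N}(|v|)v\|_{H^J}\le C(\delta,R)$, and a corresponding difference estimate $\|\mathcal{N}(|v|)v-\mathcal{N}(|w|)w\|_{H^J}\le C(\delta,R)\|v-w\|_{H^J}$.

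Concretely, I would fix $\eta$ from \eqref{nonvanish}, set $\delta_0 = 1/\eta$ so $\inf_x|u_0(x)|\ge\delta_0$, and work in the complete metric space
\[
X_{T} = \Big\{ u\in C([-T,T];H^J(\T^N)) : \|u-S(\cdot)u_0\|_{C_TH^J}\le \delta_0/2,\ \inf_{t,x}|u(t,x)|\ge \delta_0/2 \Big\},
\]
with the $C([-T,T];H^J)$ metric. The second constraint is automatically implied by the first together with $\inf_x|u_0(x)|\ge\delta_0$, $\|S(t)u_0\|_{H^J}=\|u_0\|_{H^J}$, and the embedding $H^J\hookrightarrow L^\infty$ — this is the mechanism by which non-vanishing is propagated. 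On $X_T$ one has a uniform lower bound $\delta_0/2$ and a uniform upper bound $R:=\|u_0\|_{H^J}+\delta_0/2$ on the $H^J$ norm, so the auxiliary lemma applies with constants depending only on $\eta$ and $\|u_0\|_{H^J}$. Then the Duhamel map $\Phi(u)(t)=S(t)u_0 - i\int_0^t S(t-t')\mathcal{N}(|u|)u\,dt'$ satisfies $\|\Phi(u)-S(\cdot)u_0\|_{C_TH^J}\le T\,C(\eta,\|u_0\|_{H^J})$ and $\|\Phi(u)-\Phi(v)\|_{C_TH^J}\le T\,C(\eta,\|u_0\|_{H^J})\|u-v\|_{C_TH^J}$; choosing $T$ small (depending only on $\eta$ and $\|u_0\|_{H^J}$) makes $\Phi$ a contraction of $X_T$ into itself. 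The fixed point is the desired solution, and by construction $\inf_{t\in[-T,T],x\in\T^N}|u(t,x)|\ge\delta_0/2>0$.

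For uniqueness in the full class $C([-T,T];H^J)$ with $\inf|u|>0$, I would take two such solutions $u,v$ with the same data, note that on a possibly shorter time interval both lie in a common $X_{T'}$ (by continuity in $t$ of the lower bound and of the $H^J$ norm), and run the difference estimate to get $\|u-v\|_{C_{T'}H^J}\le T'C\|u-v\|_{C_{T'}H^J}$, forcing $u=v$ on $[-T',T']$; a standard continuation/connectedness argument then gives $u\equiv v$ on all of $[-T,T]$. For the Lipschitz data-to-solution statement: given $0<\widetilde T<T$, take $u_0'$ close to $u_0$ in $H^J$ so that $\inf_x|u_0'(x)|$ is still $\ge$ some fixed positive constant (the embedding $H^J\hookrightarrow L^\infty$ guarantees this for a small enough $H^J$-neighborhood $V$); both solutions live in a common space on $[-\widetilde T,\widetilde T]$ once the neighborhood and time are adjusted, and subtracting the two Duhamel formulas and applying the difference estimate together with $\|S(t)(u_0-u_0')\|_{H^J}=\|u_0-u_0'\|_{H^J}$ and Grönwall yields $\|u-u'\|_{C_{\widetilde T}H^J}\le C\|u_0-u_0'\|_{H^J}$. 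The main obstacle — and the only place the structural hypotheses (i)–(ii) really enter — is the auxiliary $H^J$ composition lemma: controlling $\|\mathcal{N}(|v|)v\|_{H^J}$ and its Lipschitz dependence requires differentiating $\mathcal{N}(|v|)$ up to order $J$, which by Faà di Bruno produces sums of products of $\mathcal{N}^{(n)}(|v|)$ against derivatives of $|v|$ (themselves controlled since $|v|$ is bounded below, so $\tau\mapsto|\tau|$ is smooth on the range of $v$), and here the decay bound \eqref{decayhypho} is what keeps every $\mathcal{N}^{(n)}(|v|)$ factor bounded by a constant depending only on $\delta_0$ and $\gamma$; the algebra property of $H^J$ then closes the estimate.
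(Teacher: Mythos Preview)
Your overall architecture matches the paper's: Duhamel formulation, a Moser/Fa\`a di Bruno--type composition lemma giving $\|\mathcal N(|u|)u\|_{H^J}$ and Lipschitz difference bounds under a lower bound on $|u|$, and a contraction on a ball in $C_TH^J$ intersected with a non-vanishing set. The paper carries this out in essentially the same way (their Lemma~\ref{lemmanonlinCJ} is exactly your auxiliary lemma).

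There is, however, a genuine gap in your propagation of the non-vanishing condition. You assert that the constraint $\inf_{t,x}|u(t,x)|\ge\delta_0/2$ is ``automatically implied'' by $\|u-S(\cdot)u_0\|_{C_TH^J}\le\delta_0/2$ together with $\inf_x|u_0|\ge\delta_0$, the isometry $\|S(t)u_0\|_{H^J}=\|u_0\|_{H^J}$, and the embedding $H^J\hookrightarrow L^\infty$. This does not follow: the $H^J$ isometry of $S(t)$ says nothing about pointwise lower bounds for $S(t)u_0$, so from your listed ingredients you only get $|u(t,x)|\ge |S(t)u_0(x)|-C_1\delta_0/2$ with no control on the first term. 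The missing step is to show that $S(t)u_0$ itself stays close to $u_0$ in $L^\infty$ for small $t$. The paper does this by writing
\[
S(t)u_0-u_0=-i\int_0^t S(\tau)(-\Delta)^{s/2}u_0\,d\tau
\]
and bounding the integrand in $L^\infty$ via $H^{J-s}(\T^N)\hookrightarrow L^\infty(\T^N)$, which is valid precisely because $J-s>\tfrac N2$. This yields $\|S(t)u_0-u_0\|_{L^\infty}\le cT\|u_0\|_{H^J}$, and then $|\Phi(u)(t,x)|\ge|u_0(x)|-cT\|u_0\|_{H^J}-(\text{Duhamel integral})\ge\delta_0/2$ for $T$ small. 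This is the \emph{only} place where the ``$+s$'' in the hypothesis $J>\tfrac N2+s$ is used; your proposal invokes that hypothesis only for the algebra property (which needs merely $J>\tfrac N2$), so as written it never uses the full assumption and the contraction space $X_T$ is not shown to be preserved by $\Phi$.

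A minor difference worth noting: for uniqueness in the full class the paper runs an $L^2$ energy estimate (multiply the equation for $w=u-v$ by $\bar w$, integrate, and apply Gronwall), rather than your $H^J$ contraction-plus-continuation; both work.
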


The proof of the Theorem \ref{mainTHM2} is inspired by the methods developed in \cite{RRR} and \cite{CazNaum2016} for the NLS equation 
on $\mathbb R^N$. Our proof depends on a detailed analysis of the derivatives of the nonlinearity $\mathcal{N}(|u|)u$ and the difference $\mathcal{N}(|u|)u-\mathcal{N}(|v|)v$, where the conditions (i), (ii) in Theorem \ref{mainTHM2} as well as \eqref{nonvanish} are fundamental to achieve our results. We also remark that our results are obtained by using basic properties 
of the space $H^J(\mathbb{T}^N)$, where the regularity $J$ depends on the Sobolev embedding (see \eqref{SobEmb} below)  and the dispersion parameter $s>0$. For example, if $N=1$ and $0<s<\frac{1}{2}$, Theorem \ref{mainTHM2} shows the local well-posedness in $H^{1}(\mathbb{T})$. As a matter of fact, the condition $J>\frac{N}{2}+s$ assures that the quantity
\begin{equation}\label{linearapprox}
  \begin{aligned}
\sup_{t\in [-T,T], \, x\in \mathbb{T}^N}  \big| e^{-it(-\Delta)^{\frac{s}{2}}}u_0(x)-u_0(x) \big|  
  \end{aligned}  
\end{equation}
is small, provided that the time $T>0$ is sufficiently small (here, $e^{-it(-\Delta)^{\frac{s}{2}}}$ denotes the linear Schr\"odinger group on $\mathbb{T}^{N}$ associated to the linear equation 
$i\partial_t v-(-\Delta)^{\frac{s}{2}}v=0$, see further details in \S \ref{S:1}).    
Thus, if $u_0\in H^J(\mathbb{T}^N)$, $J>\frac{N}{2}+s$, satisfies \eqref{nonvanish}, the smallness of \eqref{linearapprox} establishes  a similar non-vanishing infimum condition for the linear evolution $e^{-it(-\Delta)^{\frac{s}{2}}}u_0$ for small times. This property is fundamental in the proof of Theorem \ref{mainTHM2}.

We next show several examples satisfying the assumptions on  
nonlinearity $\mathcal{N}(\cdot)$ in Theorem \ref{mainTHM2}.

$\bullet$ (\emph{Polynomial-type nonlinearity}). The function $\mathcal{N}(x)=x^{\gamma}$ for any real $\gamma$ (i.e., $\gamma\in \mathbb{R}$) and $x \geq 0$ satisfies the conditions (i) and (ii) of Theorem \ref{mainTHM2}. In particular, we prove the local well-posedness for the Cauchy problem for a potential with negative powers, that is, if $\nu=-\gamma$, then 
\begin{equation}\label{ptypenonl}
    \begin{cases}
        iu_t-(-\Delta)^{\frac{s}{2}} u - \frac{u}{|u|^{\nu}}=0, \quad x\in\T^N, ~t\in\R,\\
        u(0,x)=u_0(x)
    \end{cases}
\end{equation}
is locally well-posed for $\nu>0$ (as well as $\nu < 0$) and data satisfying \eqref{nonvanish}.

$\bullet$ (\emph{Logarithmic nonlinearities}). The functions  $\mathcal{N}(x)=\log(x)$ and  $\log(1+|x|^{\gamma})$, $\gamma\in \mathbb{R}$, satisfy the hypotheses of Theorem \ref{mainTHM2}. Consequently, Theorem \ref{mainTHM2} establishes the well-posedness for the initial value problems
\begin{equation}\label{logSchrodinger}
    \begin{cases}
        iu_t-(-\Delta)^{\frac{s}{2}} u + \log(|u|)u=0, \quad x\in\T^N, ~t\in\R, \qquad\\
        u(0,x)=u_0(x),
    \end{cases}
\end{equation}
and
\begin{equation}\label{logSchrodingerII}
    \begin{cases}
        iu_t-(-\Delta)^{\frac{s}{2}} u + \log(1+|u|^{\gamma})u=0, \quad x\in\T^N, ~t\in\R,\\
        u(0,x)=u_0(x).
    \end{cases}
\end{equation}

Our second main result is the local well-posedness for  the second (II) nonlinearity $\mathcal{N}(\cdot)$, which is given by a sum of some powers. Such a result allows us to consider another wide family of nonlinearities that are not covered by Theorem \ref{mainTHM2}. 

\begin{theorem}\label{mainTHM1}
Let $s>0$ and $J\in \mathbb{Z}^{+}$ be such that  $J>\frac{N}{2}+s$. Let the nonlinearity $\mathcal{N}(\cdot)$ in the equation \eqref{NLS} be as follows
$$
\mathcal{N}(|u|)=\sum\limits^\infty_{k=0}a_k|u|^{\gamma_k}, ~\gamma_k\in\mathbb{R}, ~a_k\in \mathbb{C}.
$$ 
Assume that the sequences $\{a_k\}$ and $\{\gamma_k\}$ are such that for any $R_0>0$ 
we have
\begin{equation}\label{convergenceeq}
\sum^{\infty}_{k=0}\sum_{p=0}^{J}C(\gamma_k,p)\,|a_k|
\left(  R_0^{\,|\gamma_k-2p|}+|\gamma_k-2p|R_0^{\,|\gamma_k-2p-1|}  \right)<\infty,
\end{equation}
where 
\begin{equation*}
C(\gamma_k,p) =\left\{
\begin{array}{cl}
1 &\qquad  ~ \emph{if} ~ p= 0, \\
\big| \footnotesize{\underbrace{\gamma_k(\gamma_k -2)...(\gamma_k-2(p-1) \, )}_{p-\text{times}} \big| 
} &\emph{ if } p=1, \dots, J.
\end{array}
\right.
\end{equation*}

Let $u_0\in H^J(\mathbb{T}^N)$ be such that there exist an $\eta>0$, for which \eqref{nonvanish} holds. Then, there exist a time $T>0$ and a unique solution $u\in C([-T,T];H^J(\mathbb{T}^N))$ of the Cauchy problem for \eqref{NLS} with the initial condition $u_0$ such that
\begin{equation*}
    \inf_{t\in [-T,T], \, x\in \mathbb{T}^N}|u(t,x)|>0.
\end{equation*}

Moreover, the map $u_0 \mapsto u(t,\cdot)$ is continuous in the following sense: for any $0<\widetilde{T}<T$, there exists a neighborhood $V$ of $u_0$ in $H^J(\mathbb{T}^N)$, for which \eqref{nonvanish} holds,  such that the map data-to-solution is Lipschitz continuous from $V$ into the class $C([-\widetilde{T},\widetilde{T}],H^{J}(\mathbb{T}^N))$.
\end{theorem}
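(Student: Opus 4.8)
The plan is to run the same contraction-mapping scheme underlying the proof of Theorem~\ref{mainTHM2}, but applied to each power $|u|^{\gamma_k}u$ separately and then summed in $k$, with \eqref{convergenceeq} supplying precisely the summability needed to close every estimate uniformly. One cannot simply quote Theorem~\ref{mainTHM2} for $\mathcal{N}(x)=\sum_k a_k x^{\gamma_k}$, since in general no single exponent $\gamma$ makes \eqref{decayhypho} hold on all of $(0,\infty)$; rather, the solution stays in a region where $|u|$ is bounded above and bounded away from $0$, so only the behavior of $\mathcal{N}$ on a fixed compact subinterval of $(0,\infty)$ is relevant. Concretely, since $J>\frac{N}{2}$, the embedding $H^J(\T^N)\hookrightarrow C(\T^N)$ together with \eqref{nonvanish} confines $|u_0|$ to a compact subset of $(0,\infty)$; as in Theorem~\ref{mainTHM2} I would work in a ball
\[
B=\Big\{\,u\in C([-T,T];H^J(\T^N)):\ \sup_{t\in[-T,T]}\norm{u(t)-e^{-it(-\Delta)^{\frac{s}{2}}}u_0}_{H^J}\le\rho\,\Big\},
\]
and use the smallness of \eqref{linearapprox} for small $T$ (this is where $J>\frac{N}{2}+s$ enters) to guarantee $\tfrac{1}{2\eta}\le|u(t,x)|\le R$ on $[-T,T]\times\T^N$ for every $u\in B$, with $R=R(\norm{u_0}_{H^J})$ fixed. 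Setting $R_0:=\max\{R,2\eta\}$, one has $|u(t,x)|^{\mu}\le R_0^{\,|\mu|}$ for all real $\mu$ and all $u\in B$, and \eqref{convergenceeq} is invoked for this $R_0$.

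Next I would verify that on $\{\,\tfrac{1}{2\eta}\le\tau\le R\,\}$ the series $\mathcal{N}(\tau)=\sum_k a_k\tau^{\gamma_k}$, and all of its termwise-differentiated series up to order $J$, converge absolutely and uniformly, so that $\mathcal{N}\in C^J$ there and $\mathcal{N}(|u|)u=\sum_k a_k|u|^{\gamma_k}u$ may be differentiated in $x$ term by term. The essential bookkeeping — which is exactly what \eqref{convergenceeq} encodes — is that differentiating the scalar $(u\bar u)^{\gamma_k/2}$ a total of $p$ times by the higher-order product and chain rules releases the factor $\gamma_k(\gamma_k-2)\cdots(\gamma_k-2(p-1))$ (hence $C(\gamma_k,p)$) times a power $|u|^{\gamma_k-2p}$, or $|u|^{\gamma_k-2p-1}$ after one further derivative (which additionally releases a scalar of size $\sim|\gamma_k-2p|$), times derivative factors of $u,\bar u$ absorbed by the tame product estimates in $H^J$; on $B$ these powers are $\le R_0^{|\gamma_k-2p|}$, resp.\ $R_0^{|\gamma_k-2p-1|}$. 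Thus, for a fixed polynomial $\Lambda$ independent of $k$,
\[
\norm{\,|u|^{\gamma_k}u\,}_{H^J}\ \le\ |a_k|\Big(\sum_{p=0}^{J}C(\gamma_k,p)\big(R_0^{\,|\gamma_k-2p|}+|\gamma_k-2p|\,R_0^{\,|\gamma_k-2p-1|}\big)\Big)\,\Lambda\big(\norm{u}_{H^J}\big),
\]
and, for $u,v\in B$, the same $k$-dependent prefactor bounds $\norm{\,|u|^{\gamma_k}u-|v|^{\gamma_k}v\,}_{H^J}$ times $\Lambda(\norm{u}_{H^J}+\norm{v}_{H^J})\,\norm{u-v}_{H^J}$ — this is where the lower bound $|u|,|v|\ge\tfrac{1}{2\eta}$ is used to tame negative powers. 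Summing over $k$ and invoking \eqref{convergenceeq} gives $\mathcal{N}(|u|)u\in C([-T,T];H^J(\T^N))$ with a bound depending only on $\eta$, $\norm{u_0}_{H^J}$, and the finite sum in \eqref{convergenceeq}, together with a uniform Lipschitz bound for $u\mapsto\mathcal{N}(|u|)u$ on $B$.

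With these two estimates in hand, the remaining steps are standard: the Duhamel map
\[
\Phi(u)(t)=e^{-it(-\Delta)^{\frac{s}{2}}}u_0+i\int_0^t e^{-i(t-t')(-\Delta)^{\frac{s}{2}}}\,\mathcal{N}(|u(t')|)\,u(t')\,dt'
\]
maps $B$ into itself for $T$ small (the $H^J$ bound above, integrated over $[-T,T]$, is beaten by $\rho$) and is a contraction on $B$ by the difference estimate; its unique fixed point is the solution, and membership in $B$ together with the smallness of \eqref{linearapprox} yields $\inf_{[-T,T]\times\T^N}|u|>0$. Uniqueness in the full class $C([-T,T];H^J(\T^N))$ and the Lipschitz dependence of $u_0\mapsto u$ on $[-\widetilde{T},\widetilde{T}]$ over a neighborhood $V$ on which \eqref{nonvanish} persists then follow as for Theorem~\ref{mainTHM2}, once more by summing the single-power difference estimates against \eqref{convergenceeq}.

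\textbf{Main obstacle.} I expect the only genuine difficulty to be the middle step: checking that the constants produced by the $H^J$ product- and chain-rule estimates for a single power $|u|^{\gamma_k}u$ are really dominated, uniformly in $k$, by the combination $C(\gamma_k,p)\big(R_0^{|\gamma_k-2p|}+|\gamma_k-2p|\,R_0^{|\gamma_k-2p-1|}\big)$ appearing in \eqref{convergenceeq}, and that the interchanges of $\sum_k$ with $\partial_x^{\alpha}$ and with $\int_0^t$ are legitimate — both of which come down to the absolute, locally uniform convergence furnished by \eqref{convergenceeq}. Everything else is a transcription of the proof of Theorem~\ref{mainTHM2}.
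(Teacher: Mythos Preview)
Your proposal is correct and follows essentially the same route as the paper: track the $\gamma_k$-dependence of the constants in the $H^J$ estimates for each single power $|u|^{\gamma_k}u$ and for the differences $|u|^{\gamma_k}u-|v|^{\gamma_k}v$ (this is the content of the paper's Lemma~\ref{nonlinearestimates}, and your identification of the factor $C(\gamma_k,p)$ as arising from the chain rule on $(u\bar u)^{\gamma_k/2}$ is exactly right), sum in $k$ using \eqref{convergenceeq}, and run the contraction on a ball where the infimum bound is propagated. The only cosmetic difference is that the paper works in the space $\mathcal{Z}_{T,R}$ of \eqref{zspace} (bounded $H^J$-norm plus infimum $\ge\tfrac{1}{2\eta}$) rather than a ball around the free evolution, and it carries out uniqueness via an $L^2$ energy/Gronwall argument rather than by appealing to the $H^J$ contraction; both choices are equivalent to yours.
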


The proof of Theorem \ref{mainTHM1} takes the idea from \cite{RRR}, which considered the one-dimensional nonlinear Schr\"odinger equation with combined nonlinearities posed on the real line. An important remark is that the choice of the regularity $J
\geq 0$ of the space $H^J(\mathbb{T}^N)$ in Theorem \ref{mainTHM1} is independent of the powers $\gamma_k$ of the terms in the sum of the nonlinearity, which is why we can use a series with different nonlinearities (and even infinitely many terms). Moreover, since the condition \eqref{nonvanish} does not involve weights in our case compared to the case of $\R^N$, see \eqref{infcondRn} (as in \cite{CazNaum2016}), the conclusion of Theorem \ref{mainTHM1} is valid for any real values of powers $\{\gamma_k\}$ (or $\gamma_k\in \mathbb{R}$ for any index $k$).

The following examples of nonlinear terms satisfy the assumptions of Theorem \ref{mainTHM1}, and hence, the NLS equation \eqref{NLS} with such nonlinearities is locally well-posed in $H^J(\mathbb T^N)$, provided the non-vanishing condition.

$\bullet $ (\emph{Combined nonlinearities}). Let $M\in \mathbb{Z}^{+}\cup\{0\}$, $a_k\in \mathbb{C}$, $\nu_k\in \mathbb{R}$, $0\leq k\leq M$, then
\begin{equation}\label{nonlineexamplecombi}
    \mathcal{N}(|u|)=\sum_{k=0}^{M}\frac{a_k}{|u|^{\nu_k}}
\end{equation}
satisfies the hypotheses of Theorem \ref{mainTHM1}. 

We note that if the nonlinearity $\mathcal{N}(\cdot)$ in \eqref{nonlineexamplecombi} has at least two distinct terms, i.e., 
there exist different integers $k$, $k'\geq 0$ such that $\nu_k\neq \nu_{k'}$ with $\nu_{k}\neq 0$, $\nu_{k'}\neq 0$, 
then such nonlinearity does not satisfy the assumptions of Theorem \ref{mainTHM2}, however, it does of Theorem \ref{mainTHM1}. This is one of the motivations to give two different conditions (I) and (II) on nonlinearity $\mathcal N (\cdot)$.

$\bullet$ (\emph{Exponential type nonlinearities}). Given $r,r_1,r_2\in \mathbb{R}$, Theorem \ref{mainTHM1} applies to the following examples: 
    \begin{equation}\label{E:e}
      \mathcal{N}(|u|)=e^{|u|^{r}}=\sum_{k=0}^{\infty}\frac{1}{k!}|u|^{rk},  
    \end{equation}    
    \begin{equation}\label{E:sin}
      \mathcal{N}(|u|)=\frac{\sin(|u|^{r_1})}{|u|^{r_2}}=\sum_{k=0}^{\infty}\frac{(-1)^k}{(2k+1)!}|u|^{2r_1k+r_1-r_2},  
    \end{equation}
and for a nonlinearity, where the sine function is replaced by the cosine function in \eqref{E:sin} with the corresponding series (as well as some other trigonometric functions). 
Well-posedness for such nonlinearities have been studied in \cite{RRR}. However, the results in \cite{RRR} only deal with the spatial variable on the one-dimensional real line 
and under the assumption that $r,r_1,r_2\geq 0$. 
Here, in the periodic setting, we consider arbitrary dimensions and the values $r, r_1, r_2$ can also be arbitrary real values.

\begin{rem} (i) This paper does not focus on the minimum or optimal regularity $J\in \mathbb{R}$ that guarantees the local well-posedness for solutions to \eqref{NLS} in the space $H^{J}(\mathbb{T}^N)$.  What we seek is to establish a well-posedness of solutions of \eqref{NLS} for a wide class of nonlinear terms (including $\log$ or inverse polynomials) satisfying the condition \eqref{nonvanish}. To our knowledge this is the first such result on $\mathbb{T}^N$ or in the periodic setting. It is possible that some of our results could be improved using different well-posedness techniques for \eqref{NLS} in the case $\mathcal{N}(|u|)u=|u|^{\gamma}u$, $\gamma>0$. It is not clear whether some of the standard techniques (such as via Strichartz estimates machinery) apply for nonlinearities of the form such as $\frac{u}{|u|^{\gamma}}$, $\gamma>0$. 

(ii) Observe that $\mathcal{N}(x)=\log(x)$ satisfies the conditions (i) and (ii) in Theorem \ref{mainTHM2}, but not the assumptions in Theorem \ref{mainTHM1}. Also note that $\mathcal{N}(x)=e^x$ (written in power series) satisfies the assumptions of Theorem \ref{mainTHM1}, but not those in Theorem \ref{mainTHM2}. 
\end{rem}

The paper is organized as follows: in Section \ref{S:1} we set the notation and preliminaries; in Section \ref{CJnonlinearity} we establish the proof of Theorem \ref{mainTHM2}, which concerns nonlinearities of type (I), i.e.,  $\mathcal{N}(x)\in C^{J}(\mathbb{R}^{+})$ and \eqref{decayhypho}. In this same section, we establish the key nonlinear estimates in $H^J(\mathbb{T}^N)$, see Lemma \ref{lemmanonlinCJ}.  In Section \ref{Seriesnonlinearity}, we prove Theorem \ref{mainTHM1}, which establishes the local well-posedness for nonlinearities of type (II), that is, the (possibly infinite) series.  
\smallskip

{\bf Acknowledgments.}
This project started during the Summer 2022 REU program ``AMRPU @ FIU" that took place at the Department of Mathematics and Statistics of Florida International University, and was supported by the NSA grant H982302210016 and NSF (REU Site) grant DMS-2050971 (PI: S. Roudenko). In particular, support of B.S. came from those grants. O.R. and S.R. were partially supported by the NSF grant DMS-2055130.


\section{Preliminaries and Notation}\label{S:1}

We use the standard multi-index notation, $\alpha=(\alpha_1,\dots,\alpha_N)\in \mathbb{N}_0^N$ (where $\mathbb{N}_0$ denotes the set of nonnegative integers), its length $|\alpha|=\sum_{j=1}^N \alpha_j$, its factorial $\alpha!=\alpha_1!\dots \alpha_N!$, and $\alpha\leq \beta$ means that $\alpha_j\leq \beta_j$ for all $j=1,\dots,N$. 
If $\partial_{x_j}$ denotes the partial derivative with respect to $x_j$, we write $\partial^{\alpha}=\partial_{x_1}^{\alpha_1}\dots\partial_{x_N}^{\alpha_n}$. Given $\alpha,\beta \in \mathbb{N}_0^{N}$, we define the binomial coefficient $\binom{\alpha}{\beta}=\frac{\alpha!}{\beta!(\alpha-\beta)!}$. For applications of the multi-index notation such as the Leibniz rule and Taylor expansion, see \cite[Chapter 1]{SRaymond1991}.

Given $1<p\leq \infty$, $L^{p}(\mathbb{T}^N)$ denotes the usual Lebesgue space equipped with the norm $\|f\|_{L^p}^p=\int_{\mathbb{T}^N}|f(x)|^p\, dx \equiv \int_{[0,2\pi]^N}|f(x)|^p\, dx$, with the usual modification when $p=\infty$. 

Given a function $f\in C^{\infty}(\mathbb{T}^N)$, the Fourier transform of $f$ is defined by 
\begin{equation*}
 \widehat{f}(k)=\frac{1}{(2\pi)^N}\int_{\mathbb{T}^N}  f(x) e^{-i k\cdot x}\, dx, \quad k \in \mathbb{Z}^N.  
\end{equation*}
For a rapidly decaying sequence $\{a_k\}_{k\in \mathbb{Z}^N}$, the inverse Fourier transform is defined as $\{a_k\}_{k\in \mathbb{Z}^N}^{\vee}=\sum_{k\in \mathbb{Z}^N} a_k e^{ik\cdot x}$.

Let $r\in \mathbb{R}$, the Sobolev space $H^r(\mathbb{T}^N)$ consists of all the periodic distributions (distributions on $C^{\infty}(\mathbb{T}^N)$) such that 
\begin{equation}\label{standarnormHs}
  \|f\|_{H^r}^2=\sum_{k\in \mathbb{Z}^N} \langle k \rangle^{2r}|\widehat{f}(k)|^2<\infty.  
\end{equation}

In this document, we consider Sobolev spaces with integer regularity $r=J\in \mathbb{N}_0$, for which we consider the norm, which is equivalent to \eqref{standarnormHs},
\begin{equation}\label{Integerder}
   \|f\|_{H^J}=\sum\limits_{0\leq |\beta|\leq J} \|\partial^{\beta}f\|_{L^2}. 
\end{equation}
The group of solutions $\{e^{-it(-\Delta)^{\frac{s}{2}}}\}$ is associated with the linear equation 
\begin{equation}\label{LinearEQ}
i\partial_t v-(-\Delta)^{\frac{s}{2}}v=0,
\end{equation}
which can be defined via the Fourier transform as
\begin{equation*}
\big( e^{-it(-\Delta)^{\frac{s}{2}}}f\big)(x)=\{e^{-i t|k|^s}\widehat{f}(k)\}^{\vee}(x)=\sum_{k\in \mathbb{Z}^N} e^{-i t|k|^s+ik\cdot x}\widehat{f}(k).   
\end{equation*}
Consequently, it follows that the integral or Duhamel formulation of \eqref{NLS} is
\begin{equation}\label{IntegralEquation}
  u(t)=e^{-it(-\Delta)^{\frac{s}{2}}}u_0+\int_0^t e^{-i(t-\tau)(-\Delta)^{\frac{s}{2}}}\mathcal{N}(|u(\tau)|)u(\tau)\, d\tau.
\end{equation}

The strategy to prove Theorems \ref{mainTHM2} and  \ref{mainTHM1} is to use a fixed point argument based on the integral equation \eqref{IntegralEquation} in some space $\mathcal Z$, defined later (e.g., see \eqref{zspace}), which would not only preserve the regularity of solutions but also would propagate a non-vanishing condition \eqref{nonvanish}.
To control different types of nonlinearities,  we, in particular,  use Sobolev embeddings, which we recall next.  

Let $0<r<\frac{N}{2}$, and $\frac{1}{q}=\frac{1}{2}-\frac{r}{N}$. If $f\in H^{r}(\mathbb{T}^N)$,  then $f\in L^q(\mathbb{T}^N)$, and there exists a constant $C_2 = C_2(r,N)>0$ such that
\begin{equation}\label{SobEmb}
    \|f\|_{L^q}\leq C_{2} \, \|f\|_{H^r}.
\end{equation}
For the proof of this result, refer to \cite[Corollary 1.2]{ArpadTadahiro2013}.

For completeness and to fix the constants, we also mention that when $q=\infty$ above, if $r>\frac{N}{2}$, we consider the Sobolev embedding $H^{r}(\mathbb{T}^N)\hookrightarrow L^{\infty}(\mathbb{T}^N)$, which states that there exists a constant $C_{1} = C_1(r,N)>0$ such that
\begin{equation}\label{SobembInft}
   \|f\|_{L^{\infty}}\leq C_{1}\|f\|_{H^r}, 
\end{equation}
for all $f\in H^r(\mathbb{T}^N)$. 
We will use the case of \eqref{SobembInft} with $r=J\in \mathbb{N}_0$ for $J>\frac{N}{2}$. 

Additionally, if $r>\frac{N}{2}$, one has that $H^r(\mathbb{T}^N)$ is a Banach algebra for the product of functions. That is, given $f,g\in H^r(\mathbb{T}^N)$, then $fg\in H^r(\mathbb{T}^N)$ with
\begin{equation*}
  \|fg\|_{H^r}\leq c_r\|f\|_{H^r}\|g\|_{H^r}.  
\end{equation*}


\section{Well-posedness results for nonlinearities of class $C^J$}\label{CJnonlinearity}

In this part, we prove Theorem \ref{mainTHM2}, the local well-posedness of the Cauchy problem \eqref{NLS} for the type (I) family of nonlinearities in the class $ C^{J}(\mathbb{R}^{+})$, which may have certain singularities at the origin. We divide the proof of Theorem \ref{mainTHM2} into two subsections: in the first one, we prove the estimates for the $H^J$-norm of the nonlinear part of \eqref{NLS}, and in the second one, we give the proof of Theorem \ref{mainTHM2}.

\subsection{Nonlinear estimates}

In the following lemma, we obtain nonlinear estimates for general functions $u,v\in H^J(\mathbb{T}^N)$ that satisfy the non-vanishing condition \eqref{nonvanish} for some $\eta>0$. 

We remark that in the proof of Theorem \ref{mainTHM2}, we connect Lemma \ref{lemmanonlinCJ} with the solutions $u$ of \eqref{NLS}, which will have initial data $u_0$, satisfying \eqref{nonvanish} (perhaps with a different size of $\eta$).

\begin{lemma}\label{lemmanonlinCJ}
Let $J\in \mathbb{Z}^{+}$ be such that $J>\frac{N}{2}$. Assume that the nonlinearity $\mathcal{N}(\cdot)$ satisfies the hypothesis (i) and (ii) in Theorem \ref{mainTHM2} for some fixed $\gamma\in \mathbb{R}$. Additionally, let $u,v\in H^J(\mathbb{T}^N)$ be such that for some $\eta>0$, the condition \eqref{nonvanish} holds for both $u$ and $v$. Then there exist constants $c = c(J, \gamma)$ and $K = K(J,\gamma)>0$ 
such that 
  \begin{equation}\label{nonlegeneraleq1}
\|\mathcal{N}(|u|)u\|_{H^J}\leq \mathcal G_1(\eta, \norm{u}_{H^J}) \|u\|_{H^J},  
  \end{equation}  
and
\begin{equation}\label{differenceCnon}
\begin{aligned}
\|&\mathcal{N}(|u|)u-\mathcal{N}(|v|)v\|_{H^J}
 \leq \mathcal G_2 (\eta,\|u\|_{H^J},\|v\|_{H^J})\|u-v\|_{H^J},    
\end{aligned} 
\end{equation}
where
\begin{equation}\label{g_1}
\mathcal G_1(\eta, \|u\|_{H^J}) = c \, \Big( 1+\eta+\|u\|_{H^J}+\mathcal{SN}(\eta,C_1\|u\|_{H^J})\Big)^{K(J,\gamma)},
\end{equation}
\begin{equation}\label{g_2}
\begin{aligned}
\mathcal G_2(\eta,\|u\|_{H^J},\|v\|_{H^J}) =& c \, \Big(1+\eta+\|u\|_{H^J}+\|v\|_{H^J}\\
&\quad +\mathcal{SN}(\eta,C_1\|u\|_{H^J})+\mathcal{SN}(\eta,C_1\|v\|_{H^J})\Big)^{K(J,\gamma)},      
\end{aligned}  
\end{equation}
and
\begin{equation}\label{supN}
\mathcal{SN}(\eta,C_1\|u\|_{H^J}) \stackrel{def}{=}
\sup_{x\in \big[\eta^{-1},\, C_1\|u\|_{H^J} \big] }|\mathcal{N}(x)|, 
\end{equation}
where $C_1 = C_1(J,N)>0$ is the constant in \eqref{SobembInft}.
\end{lemma}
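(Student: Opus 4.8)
The plan is to estimate $\|\mathcal{N}(|u|)u\|_{H^J}$ by working with the equivalent norm \eqref{Integerder}, so it suffices to bound $\|\partial^\beta(\mathcal{N}(|u|)u)\|_{L^2}$ for each multi-index $\beta$ with $|\beta|\le J$. The central technical tool will be the Fa\`a di Bruno / generalized Leibniz formula: expanding $\partial^\beta(\mathcal{N}(|u|)u)$ produces a finite sum of terms of the schematic form $\mathcal{N}^{(n)}(|u|)\cdot(\text{product of derivatives of }|u|\text{ of total order} \le J)\cdot(\text{a derivative of }u\text{ of order}\le J)$, where $0\le n\le J$. The key structural point, which is where the non-vanishing hypothesis \eqref{nonvanish} enters decisively, is that derivatives of the real-valued function $|u| = (u\bar u)^{1/2}$ can be controlled: writing $|u|$ as a composition of the smooth (away from $0$) function $w\mapsto w^{1/2}$ with the polynomial $u\bar u$, another application of Fa\`a di Bruno expresses $\partial^\alpha |u|$ as a sum of terms $|u|^{1-2\ell}\cdot(\text{product of }\le 2\ell\text{ derivatives of }u,\bar u\text{ of total order }|\alpha|)$; since $\inf_x |u(x)|\ge \eta^{-1}$, each factor $|u|^{1-2\ell}$ is bounded pointwise by $\eta^{2\ell-1}$. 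Thus every negative power of $|u|$ that appears is harmless, converted into a positive power of $\eta$.

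Next I would assemble the pieces using H\"older's inequality together with the Sobolev embeddings \eqref{SobEmb} and \eqref{SobembInft}. In each product term, at most one factor is a top-order (order exactly $J$) derivative of $u$, which we keep in $L^2$; all remaining factors are lower-order derivatives of $u$ or $\bar u$, which by the Gagliardo--Nirenberg-type interpolation implicit in the algebra property of $H^r(\mathbb{T}^N)$ for $r>N/2$ (or by directly using \eqref{SobEmb}/\eqref{SobembInft}) are placed in the appropriate $L^p$ spaces and bounded by powers of $\|u\|_{H^J}$. The factor $\mathcal{N}^{(n)}(|u|)$ with $n\ge 1$ is handled by hypothesis (ii): $|\mathcal{N}^{(n)}(|u|)|\le c_n |u|^{\gamma-n}$, and since $\eta^{-1}\le |u(x)|\le C_1\|u\|_{H^J}$ pointwise (lower bound from \eqref{nonvanish}, upper bound from \eqref{SobembInft}), we have $|u|^{\gamma-n}\le \max(\eta^{-(\gamma-n)}, (C_1\|u\|_{H^J})^{\gamma-n})\le (1+\eta+\|u\|_{H^J})^{|\gamma-n|}(1+C_1)^{|\gamma|}$, a polynomial expression in $\eta$ and $\|u\|_{H^J}$. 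For the single term $n=0$, there is no derivative of $\mathcal{N}$ and no decay hypothesis available, so we must bound $\mathcal{N}(|u|)$ directly in $L^\infty$; this is exactly what the quantity $\mathcal{SN}(\eta,C_1\|u\|_{H^J})$ in \eqref{supN} is designed to capture, using again $|u(x)|\in[\eta^{-1},C_1\|u\|_{H^J}]$. Collecting all bounds and absorbing the finitely many combinatorial constants into $c(J,\gamma)$ and the finitely many exponents into a single $K(J,\gamma)$ yields \eqref{nonlegeneraleq1} with $\mathcal{G}_1$ as in \eqref{g_1}.

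For the difference estimate \eqref{differenceCnon}, the strategy is the same but applied to $F(u):=\mathcal{N}(|u|)u$ via the fundamental theorem of calculus: $F(u)-F(v)=\int_0^1 \tfrac{d}{d\theta}F(v+\theta(u-v))\,d\theta$, where $\tfrac{d}{d\theta}F(w_\theta)$ is linear in $u-v$ and its coefficients involve $\mathcal{N}'$, $\mathcal{N}''$, derivatives of $|w_\theta|$, and derivatives of $w_\theta$ with $w_\theta=v+\theta(u-v)$. One checks that $w_\theta$ also satisfies a non-vanishing bound: $|w_\theta(x)|\ge \eta^{-1}$ need not hold for the same $\eta$, but since both $u$ and $v$ satisfy \eqref{nonvanish} with the same $\eta$ and the infimum is along a segment, $\inf_x|w_\theta(x)|\ge\eta^{-1}$ actually does hold for every $\theta\in[0,1]$ by the triangle inequality $|w_\theta|\ge\min(|u|,|v|)$... more carefully, $|w_\theta| = |(1-\theta)v+\theta u|$ is not pointwise $\ge\min(|u|,|v|)$ for complex-valued functions in general, so instead I would note that $F$ is $C^J$ off the zero set and differentiate, then apply exactly the $H^J$-product and chain-rule estimates of the first part to each of the finitely many terms of $\partial^\beta(\tfrac{d}{d\theta}F(w_\theta))$, using $\|w_\theta\|_{H^J}\le\|u\|_{H^J}+\|v\|_{H^J}$ and, for the pointwise lower bound on $|w_\theta|$, an elementary computation that the hypothesis $\eta\inf|u_0|\ge1$ is preserved along convex combinations in the relevant regime, or alternatively re-deriving the lemma under the slightly weaker assumption that $\inf_x|w_\theta(x)|>0$ uniformly in $\theta$, which holds after possibly shrinking the neighborhood. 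This produces \eqref{differenceCnon} with $\mathcal{G}_2$ as in \eqref{g_2}.

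\textbf{Main obstacle.} The hard part is bookkeeping: carefully tracking, through the double application of Fa\`a di Bruno (first to $\mathcal{N}\circ|\cdot|$, then to $|\cdot|=(u\bar u)^{1/2}$), that every term has exactly the claimed shape --- at most one top-order derivative, all negative powers of $|u|$ of the form $|u|^{1-2\ell}$ controllable by $\eta^{2\ell-1}$, and the $\mathcal{N}$-factor either an $n\ge1$ derivative (bounded by hypothesis (ii)) or the single undifferentiated $n=0$ factor (bounded by $\mathcal{SN}$) --- and then verifying that the H\"older exponents can always be chosen consistently so that all lower-order factors land in spaces controlled by \eqref{SobEmb} or \eqref{SobembInft}. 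A subtler point is the pointwise lower bound on $|v+\theta(u-v)|$ needed for the difference estimate; handling this cleanly (rather than just asserting it) is the one place where the argument requires genuine care beyond routine estimation.
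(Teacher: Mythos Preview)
Your plan for the single-function estimate \eqref{nonlegeneraleq1} is essentially the paper's proof: Leibniz on $\mathcal{N}(|u|)\cdot u$, then Fa\`a di Bruno on $\mathcal{N}\circ|\cdot|$ and on $|u|=(u\bar u)^{1/2}$, then H\"older with \eqref{SobEmb}--\eqref{SobembInft} after casing on whether the top derivative has order below, above, or equal to $J-\tfrac{N}{2}$. The handling of the $n=0$ term via $\mathcal{SN}$ and of the $n\ge1$ terms via hypothesis~(ii) is exactly right.

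The gap is in the difference estimate. Your proposed route --- write $F(u)-F(v)=\int_0^1\tfrac{d}{d\theta}F(w_\theta)\,d\theta$ with $w_\theta=(1-\theta)v+\theta u$ and then reuse the single-function bounds --- requires a uniform pointwise lower bound on $|w_\theta|$, and you correctly flag that this fails for complex-valued $u,v$: if $u(x_0)=\eta^{-1}$ and $v(x_0)=-\eta^{-1}$ (both satisfy \eqref{nonvanish}) then $w_{1/2}(x_0)=0$. Neither of your suggested patches works at the level of this lemma: the statement is for \emph{arbitrary} $u,v\in H^J$ satisfying \eqref{nonvanish}, not for $u,v$ in a small neighborhood of a common function, so ``shrinking the neighborhood'' is not available; and there is no ``relevant regime'' in which convex combinations of nonvanishing complex functions stay nonvanishing.

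The paper avoids the interpolant entirely. It first expands $\partial^\alpha(\mathcal{N}(|u|)u)$ and $\partial^\alpha(\mathcal{N}(|v|)v)$ separately via the same Fa\`a di Bruno decomposition used for \eqref{nonlegeneraleq1}, and then telescopes the resulting products term by term, producing a finite list of factors $\mathcal{C}_1,\dots,\mathcal{C}_5,\mathcal{C}_{6,k,\lambda}$, each containing exactly one ``difference'' piece. The crucial point is that every such difference piece is of one of three types: a derivative $\partial^{\alpha'}(u-v)$; a scalar difference $|u|^{\widetilde\gamma}-|v|^{\widetilde\gamma}$; or a scalar difference $\mathcal{N}^{(p)}(|u|)-\mathcal{N}^{(p)}(|v|)$. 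For the latter two one applies the \emph{one-dimensional} mean value theorem on the real interval between $|u(x)|$ and $|v(x)|$, which lies entirely in $[\eta^{-1},\infty)$ since both endpoints do. Combined with hypothesis~(ii) this yields, e.g.,
\[
\big|\mathcal{N}^{(p)}(|u|)-\mathcal{N}^{(p)}(|v|)\big|\le c\big(|u|^{\gamma-p-1}+|v|^{\gamma-p-1}\big)\,|u-v|,
\]
and similarly $\big||u|^{\widetilde\gamma}-|v|^{\widetilde\gamma}\big|\le|\widetilde\gamma|\big(|u|^{|\widetilde\gamma-1|}+|v|^{|\widetilde\gamma-1|}+\eta^{|\widetilde\gamma-1|}\big)|u-v|$. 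No lower bound on any complex convex combination is ever needed. Once you replace the FTC step by this telescoping-plus-scalar-MVT argument, the rest of your plan (H\"older, Sobolev, bookkeeping of exponents into $K(J,\gamma)$) goes through unchanged.
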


\begin{proof}
We begin by noticing that the infimum condition \eqref{nonvanish} and the Sobolev embedding \eqref{SobembInft} yield
\begin{equation*}
   \eta^{-1}\leq |u|\leq \|u\|_{L^{\infty}}\leq C_1\|u\|_{H^J}.
\end{equation*}
Then, since $\mathcal{N}\in C(\mathbb{R}^{+})$, we deduce the following uniform bound
\begin{equation}\label{supest}
    |\mathcal{N}(|u|)|\leq \sup_{x\in [\eta^{-1},\, C_1\|u\|_{H^J}]}|\mathcal{N}(x)| \equiv \mathcal{SN}(\eta,C_1\|u\|_{H^J}) <\infty, 
\end{equation}
where the finiteness comes from $\mathcal{N}$ being continuous over the compact $[\eta^{-1},\, C_1\|u\|_{H^J}]\subset (0,\infty)$.

Moreover, the condition \eqref{nonvanish} and Sobolev embedding imply that for all $\widetilde{\gamma}\in \mathbb{R}$, we have
\begin{equation}\label{nonlinear1}
\begin{aligned}
|u|^{\widetilde{\gamma}}\leq \eta^{|\widetilde{\gamma}|}+\|u\|_{L^{\infty}}^{|\widetilde{\gamma}|}\leq c^{|\widetilde{\gamma}|}\big(\eta^{|\widetilde{\gamma}|}+\|u\|_{H^J}^{|\widetilde{\gamma}|}\big)
 \end{aligned}
\end{equation}
for some universal constant $c\geq 1$ independent of $u$, $\eta$, and $\widetilde{\gamma}$. More precisely, \eqref{nonlinear1} is obtained by putting together the estimate for $|u|^{\widetilde{\gamma}}$ in each of the cases $\widetilde{\gamma}\leq 0$ (using \eqref{nonvanish}), and $\widetilde{\gamma}>0$ (using Sobolev embedding \eqref{SobembInft}).

We now prove \eqref{nonlegeneraleq1}. Let $\alpha$ be a multi-index with order $|\alpha|\leq J$. By using the   Leibniz rule, we write
\begin{equation*}
  \partial^{\alpha}(\mathcal{N}(|u|)u)=\sum_{\beta\leq \alpha}\binom{\alpha}{\beta}\partial^{\beta}(\mathcal{N}(|u|))\,\partial^{\alpha-\beta}u.
\end{equation*}
When $|\beta|\geq 1$, an inductive argument on the size of the multi-index $\beta$ yields the following identity
\begin{equation*}
 \begin{aligned}
 \partial^{\beta}(\mathcal{N}(|u|))=\sum_{p=1}^{|
\beta|}\sum_{\substack{l_1+\dots+l_p=\beta\\ |l_j|\geq 1, \, j=1,\dots,p}}c_{l_1,\dots,l_p}\, \mathcal{N}^{(p)}(|u|)\,\partial^{l_1}(|u|)\dots \partial^{l_p}(|u|)
\end{aligned}   
\end{equation*}
for some constants $c_{l_1,\dots,l_p}$. 
Thus, we get
\begin{equation}\label{nonleestgeneraleq1}
 \begin{aligned}
     \partial^{\alpha}(\mathcal{N}(|u|)u)=&\mathcal{N}(|u|)\,\partial^{\alpha}u\\
     &+
     \sum_{0<\beta\leq \alpha}\sum_{p=1}^{|
\beta|}\sum_{\substack{l_1+\dots+l_p=\beta\\ |l_j|\geq 1, \, j=1,\dots,p}}c_{l_1,\dots,l_p}\, \mathcal{N}^{(p)}(|u|)\,\partial^{l_1}(|u|)\dots \partial^{l_p}(|u|)\,\partial^{\alpha-\beta}u.
 \end{aligned}   
\end{equation}
Note that we use `the zero convention' for the empty sum, i.e., if $\alpha=0$, then $\sum\limits_{0<\beta\leq \alpha}(\cdots)=0$, and hence, the expression in  \eqref{nonleestgeneraleq1} makes sense for all $0\leq |\alpha|\leq J$. Additionally, we also have that for a given multi-index $l$ with $|l|\geq 1$,
\begin{equation}\label{caseabsolform}
\partial^{l} (|u|)=\sum^{|l|}_{m=1}|u|^{1-2m}\sum_{\substack{l'_1+...+l'_m=l\\ |l'_j|\geq 1, \, j=1,\dots,m}} c_{l'_1,\dots,l'_m}\partial^{l'_1}(|u|^2) \dots \partial^{l'_m}(|u|^2),
\end{equation} 
for some constants $c_{l'_1,\dots,l'_m}$. We give a more general formula for the derivative of $|u|^{\gamma}$, $\gamma\in \mathbb{R}$, in \eqref{jderiv} below. Consequently, by \eqref{nonleestgeneraleq1}, \eqref{caseabsolform}, and using the Leibniz rule to compute $\partial^{l}(|u|^2)=\partial^{l}(u\overline{u})$, we have that $\partial^{\alpha}(\mathcal{N}(|u|)u)$ can be written as a sum of the factors 
\begin{equation}\label{Apart}
   \mathcal{A}_{\alpha}:= \mathcal{N}(|u|)\partial^{\alpha}u
\end{equation}
and
\begin{equation}\label{Bpart}
\mathcal{B}_{\beta,p}:= \mathcal{N}^{(p)}(|u|)\Big(\prod_{j=1}^p|u|^{1-2m_j}\partial^{l_{j,1,1}}u\overline{\partial^{l_{j,1,2}}u}\dots \partial^{l_{j,m_j,1}}u\overline{\partial^{l_{j,m_j,2}}u}\Big)\partial^{\alpha-\beta}u,  
\end{equation}
where 
\begin{equation}\label{restriccoeff}
\left\{\begin{aligned}
&0<\beta\leq \alpha,\qquad 1\leq p\leq |\beta|,\\
&l_1+\dots+l_p=\beta, \quad |l_j|\geq 1 \, \, \text{ for all } j=1,\dots p,\\  
&1\leq m_j\leq |l_j|, \,\,\text{ for each } j=1,\dots,p,\\
&l_{j,1,1}+l_{j,1,2}+\dots+l_{j,m_j,1}+l_{j,m_j,2}=l_j, \,  \, \text{ for each } j=1,\dots, p,\\
&|l_{j,r,1}|+|l_{j,r,2}|\geq 1,\,\, \text{for each } j=1,\dots, p \, \text{ and } 1\leq r\leq m_j.
\end{aligned}\right.    
\end{equation}
Hence, since $\alpha$ with $0\leq |\alpha|\leq J$ is arbitrary, to prove \eqref{nonlegeneraleq1}, it suffices to estimate the $L^2$-norms of $\mathcal{A}_{\alpha}$ and $\mathcal{B}_{\beta,p}$. By \eqref{supest} and  H\"older's inequality, we bound
\begin{equation}\label{Aestim}
\begin{aligned}
\|\mathcal{A}_{\alpha}\|_{L^2}\leq \|\mathcal{N}(|u|)\|_{L^{\infty}}\|\partial^{\alpha}u\|_{L^2}\leq \mathcal{SN}(\eta,C_1\|u\|_{H^J})\|u\|_{H^J}.
\end{aligned}    
\end{equation}
On the other hand, we use \eqref{decayhypho} and \eqref{nonlinear1} to infer
\begin{equation}\label{Bestim}
\begin{aligned}
\|\mathcal{B}_{\beta,p}\|_{L^2}
\leq & \, c \, \big\| |u|^{\gamma-p}\Big(\prod_{j=1}^p|u|^{1-2m_j}\partial^{l_{j,1,1}}u\overline{\partial^{l_{j,1,2}}u}\dots \partial^{l_{j,m_j,1}}u\overline{\partial^{l_{j,m_j,2}}u}\Big)\partial^{\alpha-\beta} u \big\|_{L^2}  \\
\leq & \, c \, \Bigg(\eta^{\big|\gamma-2\sum\limits_{j=1}^p m_j \big|}+\|u\|_{H^J}^{\big|\gamma-2\sum\limits_{j=1}^{p}m_j \big|} \,  \Bigg)\\
&\quad\times \underbrace{\big\| \Big(\prod_{j=1}^p \partial^{l_{j,1,1}}u\overline{\partial^{l_{j,1,2}}u}\dots \partial^{l_{j,m_j,1}}u\overline{\partial^{l_{j,m_j,2}}u}\Big)\partial^{\alpha-\beta}u \big\|_{L^2}.}_{\|\widetilde{\mathcal{B}}_{\beta,p}\|_{L^2}}
\end{aligned}    
\end{equation}

To estimate $\|\widetilde{\mathcal{B}}_{\beta,p}\|_{L^2}$, we consider three cases, which are given by comparison between the number ${J-\frac{N}{2}>0}$ and 
the highest order derivative among the multi-indices $l_{j,1,1}, \, l_{j,1,2}, \, \dots$,  $l_{j,m_j,1}, \, l_{j,m_j,2}, \, \alpha-\beta$.
\smallskip

\underline{Case 1: The highest order derivative $<J-\frac{N}{2}$}. In this case, the order of each derivative of the functions in $\widetilde{\mathcal{B}}_{\beta,p}$ is less than or equal to $J-\lfloor \frac{N}{2}\rfloor-1$. Thus, taking the $L^{\infty}$-norm of all the factors in $\widetilde{\mathcal{B}}_{\beta,p}$ and using the Sobolev embedding $H^{\lfloor\frac{N}{2} \rfloor+1}(\mathbb{T}^N)\hookrightarrow L^{\infty}(\mathbb{T}^N)$ (recalling also that $\mathbb{T}^N$ has a finite measure, $\|1\|_{L^2} = (2\pi)^{\frac{N}2}$), we obtain 
\begin{equation}\label{Bestim2.0}
\begin{aligned}
\|\widetilde{\mathcal{B}}_{\beta,p}  \|_{L^2}
\leq & (2\pi)^{\frac{N}2}
\Big(\prod_{j=1}^p\|\partial^{l_{j,1,1}}u\|_{L^{\infty}}\|\partial^{l_{j,1,2}}u\|_{L^{\infty}}\dots \|\partial^{l_{j,m_j,1}}u\|_{L^{\infty}}\|\partial^{l_{j,m_j,2}}u\|_{L^{\infty}}\Big)\|\partial^{\alpha-\beta}u\|_{L^{\infty}}\\
\leq & \, c \, \Big(\|u\|_{H^J}\Big)^{2\sum_{j=1}^p m_j}\|u\|_{H^J}.
\end{aligned}  
\end{equation}

\underline{Case 2: The highest order derivative $>J-\frac{N}{2}$}. If the derivative of the highest order in $\|\widetilde{\mathcal{B}}_{\beta,p}\|_{L^2}$ is equal to $J$, then it must follow that $p=1$, $|\beta|=J$, $\alpha=\beta$, and $m_1=1$. Thus, in this case, the Sobolev embedding yields
\begin{equation}\label{intercase}
\begin{aligned}
\|\widetilde{\mathcal{B}}_{\beta,p}\|_{L^2}\leq c\|\big(|u|^{-1}u \partial^{\beta} u\big) u\|_{L^2}\leq c\|u\|_{L^{\infty}}\|\partial^{\beta}u\|_{L^2}\leq c\|u\|_{H^J}^2.
\end{aligned}  
\end{equation}
Next, we assume that the derivative of the highest order is strictly less than $J$ and strictly greater than $J-\frac{N}{2}$. Consequently, to simplify the notation and separate the derivative of the highest order, we use H\"older's inequality to write 
\begin{equation}\label{decompHolder}
\begin{aligned}
 \|\widetilde{\mathcal{B}}_{\beta,p}\|_{L^2}\leq c  \bigg(\prod_{j=1}^p \|\partial^{\widetilde{l}_{j,1}}u\|_{L^q}\dots\|\partial^{\widetilde{l}_{j,2m_j}}u\|_{L^q} \bigg)\|\partial^{\widetilde{\beta}}u\|_{L^{q^{\ast}}}
\end{aligned}    
\end{equation}
for some multi-indices $J-\frac{N}{2}<|\widetilde{\beta}|<J$, $\Big|\sum\limits_{j=1}^p\sum\limits_{k=1}^{2m_j}\widetilde{l}_{j,k}\Big|\leq J-|\widetilde{\beta}|<\frac{N}{2}$, and 
\begin{equation*}
\left\{ \begin{aligned}
 & \frac{1}{q^{\ast}}=\frac{1}{2}-\frac{(J-|\widetilde{\beta}|)}{N},\\
 & \frac{1}{q}=\frac{1}{2}-\frac{r}{N},
 \end{aligned}\right.   
\end{equation*}
with
\begin{equation}
    r:=\bigg(\frac{N}{2}\bigg(\sum_{j=1}^p 2m_j\bigg)-(J-|\widetilde{\beta}|)\bigg)\bigg(\sum_{j=1}^p 2m_j\bigg)^{-1}.
\end{equation}
One can check that $0<r< \frac{N}{2}$ and $\Big(\sum\limits_{j=1}^p 2m_j\Big)\frac{1}{q}+\frac{1}{q^{\ast}}=\frac{1}{2}$. Then, using \eqref{SobEmb} in \eqref{decompHolder} yields
\begin{equation}\label{Bestim2.1}
\begin{aligned}
 \|\widetilde{\mathcal{B}}_{\beta,p}\|_{L^2}\leq c \Big(\|u\|_{H^J}\Big)^{2\sum_{j=1}^p m_j} \|u\|_{H^J}.
\end{aligned}    
\end{equation}
\underline{Case 3: The highest order derivative $=J-\frac{N}{2}$}. This case can be treated as we did when $J>\frac{N}{2}$. Notice that $J-\frac{N}{2}$ being an integer implies that the dimension $N$ must be an even integer. Now, taking the derivative of the highest order, an application of H\"older's inequality yields
\begin{equation}\label{eqdecaneq1}
\begin{aligned}
 \|\widetilde{\mathcal{B}}_{\beta,p}\|_{L^2}\leq c  \bigg(\prod_{j=1}^p \|\partial^{\widetilde{l}_{j,1}}u\|_{L^q}\dots\|\partial^{\widetilde{l}_{j,2m_j}}u\|_{L^q} \bigg)\|\partial^{\widetilde{\beta}}u\|_{L^{q}}
\end{aligned}    
\end{equation}
for some multi-indices with $|\widetilde{\beta}|=J-\frac{N}{2}$,  $\Big|\sum\limits_{j=1}^p\sum\limits_{k=1}^{2m_j}\widetilde{l}_{j,k}\Big|\leq J-|\widetilde{\beta}|\leq\frac{N}{2}$, and $q=2\big(1+\sum\limits_{j=1}^p 2m_j\big)$. Thus, setting $0<r=\frac{N\big(\sum_{j=1}^p2m_j\big)}{2\big(1+\sum_{j=1}^p2m_j\big)}<\frac{N}{2}$, using that all the derivatives $|\widetilde{l}_{j,k}|\leq J-\frac{N}{2}$, and applying the Sobolev embedding \eqref{SobEmb} to the right-hand side of \eqref{eqdecaneq1}, we get
\begin{equation}\label{Bestim2}
\begin{aligned}
 \|\widetilde{\mathcal{B}}_{\beta,p}\|_{L^2}\leq c \Big(\|u\|_{H^J}\Big)^{2\sum_{j=1}^p m_j} \|u\|_{H^J},
\end{aligned}    
\end{equation}
where we have also used that $J-\frac{N}{2}+r\leq J$. This completes 
Case 3. 
Gathering together the estimates from Cases 1, 2 and 3 above, we obtain the estimate \eqref{Bestim}.

Consequently, setting $p_2=\sum_{j=1}^p m_j\leq J$, and 
\begin{equation}\label{Kdef}
    K(J,\gamma):=\max_{0\leq p_1,p_2\leq J}\bigg\{\Big|\gamma-2p_2\Big|+2p_2,|\gamma-p_1-1|+|p_1-2p_2|+2p_2+1,|p_1-2p_2-1|+2p_2+2\bigg\},
\end{equation} 
it follows that \eqref{nonlegeneraleq1} is a consequence of the estimates \eqref{Aestim}, \eqref{Bestim}, \eqref{Bestim2.0}, \eqref{Bestim2.1}, \eqref{Bestim2}, and the fact that $\alpha$ is arbitrary. Here, we included more terms in the above constant $K(J,\gamma)$ as it will also be needed for the proof of 
\eqref{g_2}.

Next, we show \eqref{differenceCnon}. From the representations \eqref{Apart} and \eqref{Bpart}, and re-grouping terms, we write the difference $\mathcal{N}(|u|)u-\mathcal{N}(|v|)v$ as a sum of the following factors:
\begin{equation*}
 \begin{aligned}
& \mathcal{C}_1:=(\mathcal{N}(|u|)-\mathcal{N}(|v|))\partial^{\alpha}u,\\
 & \mathcal{C}_2:=\mathcal{N}(|v|)(\partial^{\alpha}u-\partial^{\alpha}v),\\
 &\mathcal{C}_3:= \big(\mathcal{N}^{(p)}(|u|)-\mathcal{N}^{(p)}(|v|)\big)|u|^{p-\sum_{j=1}^{p}2m_j}\Big(\prod_{j=1}^p\partial^{l_{j,1,1}}u\overline{\partial^{l_{j,1,2}}u}\dots \partial^{l_{j,m_j,1}}u\overline{\partial^{l_{j,m_j,2}}u}\Big)\partial^{\alpha-\beta}u,\\
&\mathcal{C}_{4}:=\mathcal{N}(|v|)\Big(|u|^{p-2\sum\limits_{j=1}^p m_j}-|v|^{p-2\sum\limits_{j=1}^{p}m_j}\Big)\prod_{j=1}^p \Big(\partial^{l_{j,1,1}}u\overline{\partial^{l_{j,1,2}}u}\dots \partial^{l_{j,m_j,1}}u\overline{\partial^{l_{j,m_j,2}}u}\Big)\partial^{\alpha-\beta}u,\\
&\mathcal{C}_5:=\mathcal{N}(|v|)|v|^{p-2\sum\limits_{j=1}^{p}m_j}\prod_{j=1}^p \Big(\partial^{l_{j,1,1}}v\overline{\partial^{l_{j,1,2}}v}\dots \partial^{l_{j,m_j,1}}v\overline{\partial^{l_{j,m_j,2}}v}\Big)\partial^{\alpha-\beta}(u-v),
 \end{aligned}   
\end{equation*}
where we also assumed the same restrictions as in \eqref{restriccoeff}, and furthermore, for $1\leq k\leq p$ and $1\leq \lambda\leq m_k$, the difference $\mathcal{N}(|u|)u-\mathcal{N}(|v|)v$ also incorporates the terms
\begin{equation*}
\begin{aligned}
 &\mathcal{C}_{6,k,\lambda}:=\mathcal{N}(|v|)|v|^{p-2\sum\limits_{j=1}^{p}m_j}\Big(\prod_{j=1}^{k-1}\partial^{l_{j,1,1}}v\overline{\partial^{l_{j,1,2}}v}\dots \partial^{l_{j,m_j,1}}v\overline{\partial^{l_{j,m_j,2}}v}\Big)\\
&\hspace{2cm}\times\Big(\partial^{l_{k,1,1}}v\overline{\partial^{l_{k,1,2}}v}\dots \partial^{l_k,\lambda,1}(u-v)\overline{\partial^{l_k,\lambda,2}u}\dots \partial^{l_{k,m_k,1}}u\overline{\partial^{l_{k,m_k,2}}u}\\
&\hspace{3cm}+\partial^{l_{k,1,1}}v\overline{\partial^{l_{k,1,2}}v}\dots \partial^{l_k,\lambda,1}v\overline{\partial^{l_k,\lambda,2}(u-v)}\dots \partial^{l_{k,m_k,1}}u\overline{\partial^{l_{k,m_k,2}}u}\Big)\\
&\hspace{2cm}\times\Big(\prod_{j=k+1}^{p}\partial^{l_{j,1,1}}u\overline{\partial^{l_{j,1,2}}u}\dots \partial^{l_{j,m_k,1}}u\overline{\partial^{l_{j,m_k,2}}u}\Big)\partial^{\alpha-\beta}u,   
\end{aligned}
\end{equation*}
where we use the convention $\prod_{j=1}^0(\cdots)=\prod_{j={p+1}}^p(\cdots)=1$. Before estimating the previous factors, we first notice that given $\widetilde{\gamma}\in \mathbb{R}$, the mean value inequality, \eqref{nonvanish} and the Sobolev embedding  yield
\begin{equation}\label{differengammaest}
\begin{aligned}
  ||u|^{\widetilde{\gamma}}-|v|^{\widetilde{\gamma}}|\leq & |\gamma|(\eta^{|\widetilde{\gamma}-1|}+|u|^{|\widetilde{\gamma}-1|}+|v|^{|\widetilde{\gamma}-1|})|u-v|  \\ 
  \leq & C_1^{|\widetilde{\gamma}-1|}|\widetilde{\gamma}|(\eta^{|\widetilde{\gamma}-1|}+\|u\|_{H^J}^{|\widetilde{\gamma}-1|}+\|v\|_{H^J}^{|\widetilde{\gamma}-1|})\|u-v\|_{H^J},
\end{aligned}
\end{equation}
where $C_1$ is as in \eqref{SobembInft}. Additionally, letting $0\leq p'\leq J$ to be an integer, a further application of the mean value inequality and the decay condition \eqref{decayhypho} allows us to write 
\begin{equation}\label{Ndiffere}
 \begin{aligned}
|\mathcal{N}^{(p')}(|u|)-\mathcal{N}^{(p')}(|u|)| \leq & c\big(|u|^{\gamma-(p'+1)}+|v|^{\gamma-(p'+1)}\big)|u-v|\\
\leq & c\big(\eta^{|\gamma-p'-1|}+\|u\|_{L^{\infty}}^{|\gamma-p'-1|}+\|v\|_{L^{\infty}}^{|\gamma-p'-1|}\big)|u-v|\\
\leq & c\big(\eta^{|\gamma-p'-1|}+\|u\|_{H^J}^{|\gamma-p'-1|}+\|v\|_{H^J}^{|\gamma-p'-1|}\big)\|u-v\|_{H^J},
 \end{aligned}   
\end{equation}
where we used the Sobolev embedding \eqref{SobembInft} and the fact that $u$ and $v$ satisfy \eqref{nonvanish}. Now, we deal with the factors $\mathcal{C}_j$, $\mathcal{C}_{6,k,\lambda}$. By \eqref{Ndiffere}, and \eqref{supest}, we get
\begin{equation}\label{estimC1C2}
  \begin{aligned}
\|\mathcal{C}_1\|_{L^2}+\|\mathcal{C}_2\|_{L^2}\leq & \|\mathcal{N}(|u|)-\mathcal{N}(|v|)\|_{L^{\infty}}\|\partial^{\beta}u\|_{L^2}+\|\mathcal{N}(|v|)\|_{L^{\infty}}\|\partial^{\alpha}u-\partial^{\alpha}v\|_{L^2}\\
  \leq & c\big(\eta^{|\gamma-2|}+\|u\|_{H^J}^{|\gamma-2|}+\|v\|_{H^J}^{|\gamma-2|}\big)\|u\|_{H^J}\|u-v\|_{H^J}\\
  &+\big(\mathcal{SN}(\eta,C_1\|u\|_{H^J})+\mathcal{SN}(\eta,C_1\|v\|_{H^J})\big)\|u-v\|_{H^J}.    
  \end{aligned}  
\end{equation}
We use \eqref{Ndiffere} and the ideas from \eqref{Bestim}-\eqref{Bestim2} to infer
\begin{equation}\label{estimC3}
\begin{aligned}
 \|\mathcal{C}_3\|_{L^2}
  \leq \,  & \|\mathcal{N}^{(p)}(|u|)-\mathcal{N}^{(p)}(|v|)\|_{L^{\infty}}\\
  & \qquad \times\||u|^{p-2\sum_{j=1}^p m_j}\Big(\prod_{j=1}^p\partial^{l_{j,1,1}}u\overline{\partial^{l_{j,1,2}}u}\dots \partial^{l_{j,m_j,1}}u\overline{\partial^{l_{j,m_j,2}}u}\Big)\partial^{\alpha-\beta}u\|_{L^2}\\
  \leq & c\,\big(\eta^{|\gamma-p-1|}+\|u\|_{H^J}^{|\gamma-p-1|}+\|v\|_{H^J}^{|\gamma-p-1|}\big)(\|u\|_{H^J}^{\big|p-2\sum_{j=1}^p m_j\big|+2\sum_{j=1}^p m_j+1})\|u-v\|_{H^J}.   
\end{aligned}
\end{equation}
Similarly as above, \eqref{supest}, \eqref{differengammaest} and the estimates for $\widetilde{\mathcal{B}}_{\beta,p}$ from \eqref{Bestim}-\eqref{Bestim2} yield
\begin{equation}\label{estimC4}
 \begin{aligned}
\|\mathcal{C}_4\|\leq \, & \|\mathcal{N}(|v|)\|_{L^{\infty}}
\Big\| |u|^{p-2\sum\limits_{j=1}^p m_j}-|v|^{p-2\sum\limits_{j=1}^{p}m_j} \Big\|_{L^{\infty}}\\
&\hspace{2cm}\times \Big\| \prod_{j=1}^p \Big(\partial^{l_{j,1,1}}u\overline{\partial^{l_{j,1,2}}u}\dots \partial^{l_{j,m_j,1}}u\overline{\partial^{l_{j,m_j,2}}u}\Big)\partial^{\alpha-\beta}u \Big\|_{L^2},\\
\leq &\, c\big(\mathcal{SN}(\eta,C_1\|u\|_{H^J})+\mathcal{SN}(\eta,C_1\|v\|_{H^J})\big)\\
&\qquad \times \Big(\eta^{\big|p-2\sum_{j=1}^p m_j-1\big|}+\|u\|_{H^J}^{\big|p-2\sum_{j=1}^p m_j-1\big|}+\|v\|_{H^J}^{\big|p-2\sum_{j=1}^p m_j-1\big|}\Big)\\
&\qquad \, \,\times(\|u\|_{H^J}^{1+2\sum_{j=1}^p m_j})\|u-v\|_{H^J}.
 \end{aligned}   
\end{equation}
Next, we use \eqref{supest}, and \eqref{nonlinear1} to bound
\begin{equation*}
 \begin{aligned}
\|\mathcal{C}_5\|_{L^5}\leq \, & \|\mathcal{N}(|v|)\|_{L^{\infty}} \big\||v|^{p-2\sum\limits_{j=1}^{p}m_j} \big\|_{L^{\infty}}\\
&\hspace{2cm}\times \Big\|\prod_{j=1}^p \Big(\partial^{l_{j,1,1}}v\overline{\partial^{l_{j,1,2}}v}\dots \partial^{l_{j,m_j,1}}v\overline{\partial^{l_{j,m_j,2}}v}\Big)\partial^{\alpha-\beta}(u-v) \Big\|_{L^2}\\
\leq & c\big(\mathcal{SN}(\eta,C_1\|u\|_{H^J})+\mathcal{SN}(\eta,C_1\|v\|_{H^J})\big)\\
&\qquad \times\Big(\eta^{\big|p-2\sum_{j=1}^p m_j\big|}+\|u\|_{H^J}^{\big|p-2\sum_{j=1}^p m_j\big|}+\|v\|_{H^J}^{\big|p-2\sum_{j=1}^p m_j\big|}\Big)\\
&\hspace{2cm}\times \Big\|\prod_{j=1}^p \Big(\partial^{l_{j,1,1}}v\overline{\partial^{l_{j,1,2}}v}\dots \partial^{l_{j,m_j,1}}v\overline{\partial^{l_{j,m_j,2}}v}\Big)\partial^{\alpha-\beta}(u-v) \Big\|_{L^2}.
 \end{aligned}   
\end{equation*}
Using the fact that $J>\frac{N}{2}$, by similar arguments to those in \eqref{Bestim2.0}-\eqref{Bestim2}, where we split into cases comparing the size of the highest order derivative with $J-\frac{N}{2}$, we obtain 
\begin{equation*}
 \begin{aligned}
\Big \|\prod_{j=1}^p \Big(\partial^{l_{j,1,1}}v\overline{\partial^{l_{j,1,2}}v}\dots \partial^{l_{j,m_j,1}}v\overline{\partial^{l_{j,m_j,2}}v}\Big)\partial^{\alpha-\beta}(u-v) \Big\|_{L^2}\leq \Big(\|v\|_{H^J}^{2\sum_{j=1}^p m_j}\Big)\|u-v\|_{H^J}.     
 \end{aligned}   
\end{equation*}
Summarizing, we get
\begin{equation}\label{estimC5}
\begin{aligned}
\|\mathcal{C}_5 \|_{L^2}
  \leq & \, c\big(\mathcal{SN}(\eta,C_1\|u\|_{H^J})+\mathcal{SN}(\eta,C_1\|v\|_{H^J})\big)\\
&\, \, \times \Big(\eta^{\big|p-2\sum_{j=1}^p m_j\big|}+\|u\|_{H^J}^{\big|p-2\sum_{j=1}^p m_j\big|}+\|v\|_{H^J}^{\big|p-2\sum_{j=1}^p m_j\big|}\Big)\Big(\|v\|_{H^J}^{2\sum_{j=1}^p m_j}\Big)\|u-v\|_{H^J}.
\end{aligned}    
\end{equation}
A similar argument to that in the estimate of $\mathcal{C}_5$, using \eqref{supest}, \eqref{nonlinear1}, and the ideas in \eqref{Bestim2.0}-\eqref{Bestim2} yield
\begin{equation}\label{estimC6}
\begin{aligned}
\|\mathcal{C}_{6,k,\lambda}\|_{L^2} \leq & c\big(\mathcal{SN}(\eta,C_1\|u\|_{H^J})+\mathcal{SN}(\eta,C_1\|v\|_{H^J})\big)\\
&\, \, \times\big(\eta^{\big|p-2\sum_{j=1}^p m_j\big|}+\|u\|_{H^J}^{\big|p-2\sum_{j=1}^p m_j\big|}+\|v\|_{H^J}^{\big|p-2\sum_{j=1}^p m_j\big|}\big)\\
&\,\,\times\big(\|v\|_{H^J}^{2\sum\limits_{j=1}^{k-1}m_j}+\|u\|_{H^J}^{2\sum\limits_{j=k+1}^{p}m_j}\big)\|u\|_{H^J}\|u-v\|_{H^J}.  
\end{aligned}    
\end{equation}
Gathering \eqref{estimC1C2}, \eqref{estimC3}, \eqref{estimC4}, \eqref{estimC5}, \eqref{estimC6} and the definition of $K(J,\gamma)>0$ in \eqref{Kdef}, we complete the proof of \eqref{differenceCnon}. 
\end{proof}


\subsection{Proof of Theorem \ref{mainTHM2}}

We are now in the position to prove Theorem \ref{mainTHM2}. We show first the existence and uniqueness of solutions for the Cauchy problem \eqref{NLS} with the type (I) nonlinearity, of the form \eqref{decayhypho}. We then show the continuous dependence as a consequence of our arguments in the proof of the existence below.
\smallskip

\underline{Existence of solutions}. We use the contraction mapping principle based on the integral formulation \eqref{IntegralEquation}. 
More precisely, given $s>0$, let $J\in \mathbb{Z}^{+}$ be such that $J>\frac{N}{2}+s$, and let $u_0\in H^J(\mathbb{T}^N)$, for which \eqref{nonvanish} holds for some $\eta>0$. For $R>0$ and $T>0$ to be chosen later, we consider the complete space $\mathcal{Z}_{T,R}$ defined as

\begin{equation}\label{zspace}
\mathcal{Z}_{T,R}=
\begin{cases}
    & u\in C([-T,T], H^J(\T^N)): \\  
    & ~~\sup \limits_{t\in[-T,T]} \norm{u(t)}_{H^J} \le R, \\
    & ~~\eta \big(\inf \limits_{t\in [-T,T],\, x\in \mathbb{T}^N  } |u(t,x)|\big)\geq \frac12
\end{cases}
\end{equation}
equipped with the distance function $\sup\limits_{t\in [-T,T]}\|u(t)-v(t)\|_{H^J}$, for $u,v\in \mathcal{Z}_{T,R}$. 

Note that the infimum condition in the definition of the space $\mathcal{Z}_{T,R}$ is bounded below by $\frac12$
as we cannot guarantee that at a later time the solution of \eqref{NLS} with the initial condition $u_0$ remains non-zero with the same constant as in \eqref{nonvanish}.

Now, given $u\in \mathcal{Z}_{T,R}$, for $t\in [-T,T]$ we consider  
\begin{equation*}
 \begin{aligned}
  \Phi(u)(t)=e^{-it(-\Delta)^{\frac{s}{2}}}u_0+\int_0^t e^{-i(t-\tau)(-\Delta)^{\frac{s}{2}}}\mathcal{N}(|u(\tau)|)u(\tau)\, d\tau,   
 \end{aligned}   
\end{equation*}
where $\mathcal N(|u|)$ takes the form \eqref{decayhypho}. We show that there exist $R>0$ and a time $T>0$ such that $\Phi$ defines a contraction on the complete metric space $\mathcal{Z}_{T,R}$. We divide our argument into two main parts: first, we show that for $R=2\|u_0\|_{H^J}$, there exists a time $T>0$, small enough, such that $\Phi$ maps $\mathcal{Z}_{T,R}$ into itself. Then, we show that $\Phi$ defines a contraction on $\mathcal{Z}_{T,R}$ provided that $T>0$ is again small enough.
\smallskip

\noindent$\bullet$ \underline{$\Phi: \mathcal{Z}_{T,R}\rightarrow \mathcal{Z}_{T,R}$}. 
Since $\{e^{-it (-\Delta)^{\frac{s}{2}}}\}$ defines a group of isometries on $H^J(\mathbb{T}^N)$, we apply Lemma \ref{lemmanonlinCJ} to get
\begin{equation}\label{integralest1}
 \begin{aligned}
   \|\Phi(u)(t)\|_{H^J}\leq & \|u_0\|_{H^J}+\Big|\int_{0}^{t}\|\mathcal{N}(|u(\tau)|)u(\tau)\|_{H^J}\, d\tau\Big|\\
   \leq& \|u_0\|_{H^J}+c \, T\Big(\sup_{t\in [-T,T]}\mathcal{G}_1(2\eta,\|u(t)\|_{H^J})\Big)\Big(\sup\limits_{t\in[-T,T]}\|u(t)\|_{H^J}\Big)\\
   \leq & \|u_0\|_{H^J}+c\,T\mathcal{G}_1(2\eta,R)R,
 \end{aligned}   
\end{equation}
where $\mathcal G_1(\cdot,\cdot)$ is defined in \eqref{g_1} with $c_1>0$, $K_{J,\gamma}>0$ being the fixed constants provided by \eqref{nonlegeneraleq1}, \eqref{differenceCnon},  and \eqref{supN}. Setting $R=2\|u_0\|_{H^J}$ and $T>0$ sufficiently small such that
\begin{align}\label{boundcondition}
c \, T\mathcal G_1(2\eta,R)\, R\le \frac{R}{2},
\end{align}
we find that $\sup\limits_{t\in[-T,T]}\|\Phi(u)(t)\|_{H^J} \le R$. Moreover, \eqref{integralest1} and the fact that $u\in C([-T,T];H^J(\mathbb T^N))$ implies $\Phi(u)(t)\in C([-T,T];H^J(\mathbb T^N))$. 

Next, we check the infimum condition. We write
\begin{equation}\label{integraldiff}
    \begin{aligned}
\Phi(u)(t) &= u_0 + \Big(e^{-it(-\Delta)^{\frac{s}{2}}}u_0-u_0\Big)+\int_0^t e^{-i(t-\tau)(-\Delta)^{\frac{s}{2}}}\mathcal{N}(|u(\tau)|)u(\tau)\, d\tau\\
&= u_0 -i\int_{0}^te^{-i \tau (-\Delta)^{\frac{s}{2}} }(-\Delta)^{\frac{s}{2}} u_0\, d\tau+\int_0^t e^{-i(t-\tau)(-\Delta)^{\frac{s}{2}}}\mathcal{N}(|u(\tau)|)u(\tau)\, d\tau.
 \end{aligned}   
\end{equation}
Since $J> \frac{N}{2}+s$, $s>0$, by Sobolev embedding 
\begin{equation}\label{linearnonvanscond}
 \Big|\int_0^t e^{-i\tau (-\Delta)^{\frac{s}{2}} }(-\Delta)^{\frac{s}{2}} u_0\, d\tau\Big|\leq T\sup \limits _{t\in [-T,T]}\|e^{-it(-\Delta)^{\frac{s}{2}}}(-\Delta)^{\frac{s}{2}} u_0\|_{L^{\infty}}\leq c \, T\|u_0\|_{H^J}.  
\end{equation}
Applying \eqref{nonvanish}, \eqref{linearnonvanscond}, and \eqref{integralest1}, we obtain the estimate 
\begin{equation*}
 \begin{aligned}
 |\Phi(u)(t)|\geq & \, \eta^{-1}- c\,T\|u_0\|_{H^J}-c\,T\mathcal{G}_1(2\eta,R)\, R.
 \end{aligned}
\end{equation*}
Consequently, taking $T>0$ sufficiently small such that 
\begin{equation}\label{infcondition}
c \, T\frac{R}{2}+c\, T\mathcal{G}_1(2\eta,R)R \leq \frac{1}{2}\eta^{-1},
\end{equation}
yields 
\begin{equation}\label{infintegraleqCJ}
2\eta\left(\inf\limits_{t\in[-T,T],x\in \mathbb T^N}|\Phi(u)(t)| \right) \ge 1.
\end{equation}
We conclude that for $R=2\|u_0\|_{H^J}$, there exists a $T>0$ such that \eqref{boundcondition} and \eqref{infcondition} hold, proving that $\Phi(u)$ defines a map from $\mathcal Z_{T,R}$ to itself. 
\smallskip

\noindent$\bullet$ \underline{$\Phi$ is a contraction}. Let $u,v\in\mathcal Z_{T,R}$. Observe that by \eqref{differenceCnon}, for any $t \in [-T,T]$
\begin{equation*}
 \begin{aligned}
\|\Phi(u)(t)-\Phi(v)(t)\|_{H^J} 
   \leq & \Big|\int_{0}^{t}\|\mathcal{N}(|u(\tau)|)u(\tau)-\mathcal{N}(|v(\tau)|)v(\tau)\|_{H^J}\, d\tau \Big|\\
   \leq& \,c\,T\Big(\sup_{t\in [-T,T]}\mathcal{G}_2(2\eta,\|u(t)\|_{H^J},\|v(t)\|_{H^J})\Big)\Big(\sup_{t\in [-T,T]}\|u(t)-v(t)\|_{H^J}\Big)\\
   \leq& \,c\,T\,\mathcal{G}_2(2\eta,R,R)\Big(\sup_{t\in [-T,T]}\|u(t)-v(t)\|_{H^J}\Big)
 \end{aligned}   
\end{equation*}
for $\mathcal G_2(\cdot,\cdot,\cdot)$ defined in \eqref{g_2}.  
Thus, taking $T>0$ small enough such that
\begin{equation*}
cT\mathcal{G}_2(2\eta,R,R)\leq \frac{1}{2},   
\end{equation*}
yields
\begin{equation}\label{diffintegraleqCJ} 
 \begin{aligned}
\|\Phi(u)(t)-\Phi(v)(t)\|_{H^J}\leq \frac{1}{2}\Big(\sup_{t\in [-T,T]}\|u(t)-v(t)\|_{H^J}\Big),
 \end{aligned}   
\end{equation}
concluding that $\Phi$ is a contraction. 

Gathering the previous estimates, there exists a time $T>0$ sufficiently small such that \eqref{integralest1}, \eqref{infintegraleqCJ}, and  \eqref{diffintegraleqCJ} imply that $\Phi: \mathcal{Z}_{T,R}\rightarrow \mathcal{Z}_{T,R}$ defines a contraction on $\mathcal{Z}_{T,R}$. Consequently, Banach fixed-point theorem yields the existence of a solution $u\in C([-T,T]; H^J(\mathbb T^N))$ of the Cauchy problem \eqref{NLS} with the initial condition $u_0$. This completes the proof of existence.
\smallskip

\underline{Uniqueness of solutions}. The uniqueness follows by energy estimates at the $L^2$-level. Let $u_1, u_2 \in C([-T,T];H^J(\mathbb{T}^N))$ be two solutions of \eqref{NLS} with the same initial condition $u_0$, for which the condition \eqref{nonvanish} holds, i.e., there exists $\eta>0$ such that 
\begin{equation}\label{uniquenessinfcond}
   \eta \big(\inf \limits_{t\in[-T,T], \, x\in \mathbb{T}^N}|u_j(t,x)|\big)\geq 1, \qquad j=1,2.
\end{equation}
Let $M>0$ be such that
\begin{equation}\label{Kbound}
  \sup\limits_{t\in[-T,T]}\|u(t)\|_{H^J}+\sup\limits_{t\in[-T,T]}\|v(t)\|_{H^J}\leq M.  
\end{equation}
Setting $w=u-v$, we obtain that $w$ solves the initial value problem
\begin{equation}\label{uniqueeq}
\begin{cases}
i w_t - (-\Delta)^{\frac{s}{2}} w +\big(\mathcal{N}(|u|)u-\mathcal{N}(|v|)v\big)=0, \\
w(0,x)=0.
\end{cases}
\end{equation}
In what follows, we establish uniqueness for times $t > 0$, by using \eqref{uniqueeq} and energy estimates. For $t < 0$, defining $\widetilde{w}(t)=w(-t)$, we apply the same argument for positive times below to the equation for $\widetilde{w}$, proving uniqueness for negative times.

Multiplying the equation in \eqref{uniqueeq} by $\overline{w}$, integrating over $\mathbb{T}^N$, and taking the imaginary part to the resulting expression, we deduce
\begin{equation}\label{unique1}
\begin{aligned}
 \frac{1}{2}\frac{d}{dt}\int_{\mathbb{T}^N} |w|^2\, dx=&-\Im \int_{\mathbb{T}^N}\big(\mathcal{N}(|u|)u-\mathcal{N}(|v|)v\big)\overline{w}\, dx\\
 \leq & \int_{\mathbb{T}^N}\big|\mathcal{N}(|u|)-\mathcal{N}(|v|)\big|u||w|\, dx+\int_{\mathbb{T}^N}|\mathcal{N}(|v|)||w|^2\, dx,
\end{aligned}
\end{equation}
where the middle term of \eqref{uniqueeq} vanishes due to the periodicity of $w$ and the fact that it is real-valued. Next, we apply \eqref{Ndiffere}, and H\"older's inequality to find
\begin{equation}\label{unique2}
\begin{aligned}
 \int_{\mathbb{T}^N}&\big|\mathcal{N}(|u|)-\mathcal{N}(|v|)\big|u||w|\, dx\\
 &\leq  c\, \big(1+\eta+\sup_{t\in[-T,T]}\|u(t)\|_{H^J}+\sup_{t\in[-T,T]}\|v\|_{H^J}\big)^{|\gamma-1|}\big(\sup_{t\in[-T,T]}\|u(t)\|_{H^J}\big)\int_{\mathbb{T}^N}|w|^2\, dx  \\
 &\leq c\,\mathcal{G}_3(\eta,M)\int_{\mathbb{T}^N}|w|^2\, dx,
\end{aligned}    
\end{equation}
and using \eqref{supest}, we find
\begin{equation}\label{unique3}
\begin{aligned}
 \int_{\mathbb{T}^N}&|\mathcal{N}(|v|)||w|^2\, dx\\
 &\leq  c\, \big(\sup_{t\in[-T,T]}\mathcal{SN}(\eta,C_1\|u(t)\|_{H^J})+\sup_{t\in[-T,T]}\mathcal{SN}(\eta,C_1\|v(t)\|_{H^J})\big)\int_{\mathbb{T}^N}|w|^2\, dx  \\
 &\leq c\, \mathcal{G}_3(\eta,M)\int_{\mathbb{T}^N}|w|^2\, dx,
\end{aligned}    
\end{equation}
where $C_1>0$ is the fixed constant provided by \eqref{SobembInft}, and
\begin{equation*}
   \mathcal{G}_3(\eta,M) \stackrel{def}{=} (1+
   \eta+2M)^{|\gamma-1|}M+2\mathcal{SN}(\eta,C_1 M). 
\end{equation*}
Hence, plugging \eqref{unique2} and \eqref{unique3} into \eqref{unique1}, and applying Gronwall's inequality, we obtain that $w=0$, that is, $u=v$. The proof of uniqueness is complete.
\smallskip

\underline{Continuous dependence}. Let $u_0\in H^{J}(\mathbb{T}^N)$ satisfy \eqref{nonvanish} for some $\eta>0$. By the existence and uniqueness parts in the above proof, there exist a time $T>0$ and a unique solution $u\in C([-T,T];H^{J}(\mathbb{T}^N))$ of \eqref{NLS} with the initial condition $u_0$ such that
\begin{equation*}
   \inf_{t\in[-T,T],\, x\in \mathbb{T}^N}|u(t,x)|>0. 
\end{equation*}
Recall that from the existence part above, when $R=2\|u_0\|_{H^J}$, we can choose $T>0$ such that
\begin{equation}\label{condcontrac}
\left\{ \begin{aligned}
   &c\,T\mathcal{G}_1(2\eta,R)R\leq \frac{R}{2}, \\
   &c\,T\mathcal{G}_2(2\eta,R,R)\leq \frac{1}{2},\\
   &c\,T\frac{R}{2}+cT\mathcal{G}_1(2\eta,R)R\leq \frac{1}{2}\eta^{-1}.
 \end{aligned}  \right. 
\end{equation}
The above conditions establish that $\Phi$ is a contraction on $\mathcal{Z}_{T,R}$ into itself, and from that we also obtain that
${\sup\limits_{t\in [-T,T]}\|u(t)\|_{H^J}\leq R}$.

Fixing $\widetilde{T} \in (0,T)$ fixed, we define $V=\{v_0\in H^J(\mathbb{T}^N): \|v_0-u_0\|_{H^J}<\epsilon\}$, where $\epsilon>0$ is such that
\begin{equation}\label{tildacond}
\left\{\begin{aligned}
&\epsilon \leq \frac{1}{4 c}\eta^{-1}\\
  &\epsilon+c\,\widetilde{T}\mathcal{G}_1(2\eta,R)R\leq \frac{R}{2},\\
  &\epsilon +c\,\widetilde{T}\frac{R}{2}-c\,\widetilde{T}\mathcal{G}_1(2\eta,R)R\leq \frac{1}{2}\eta^{-1},
\end{aligned}\right. 
\end{equation}
where $\mathcal{G}_1$ is defined in \eqref{g_1}, and $c>0$ is the same constant in \eqref{condcontrac} (which also depends on the Sobolev embedding $H^{J}(\mathbb{T}^N)\hookrightarrow L^{\infty}(\mathbb{T}^N)$ as in \eqref{SobembInft}). Observe that by the Sobolev embedding and our choice of $\epsilon$, we get
\begin{equation*}
   \begin{aligned}
       |v_0|\geq |u_0|-|u_0-v|\geq \eta^{-1}-c\,\|u_0-v_0\|_{H^J}\geq \frac{3}{4}\eta^{-1}.
   \end{aligned} 
\end{equation*}
Thus, for each $v_0\in V$,
\begin{equation*}
    \frac{4}{3}\,\eta\Big(\inf\limits_{x\in \mathbb{T}^N}|v_0(x)|\Big)\geq 1.
\end{equation*}
Consequently, using \eqref{tildacond}, we can apply the same argument in the existence part above to show that for any $v_0\in V$, there exists a unique fixed point $v\in \mathcal{Z}_{\widetilde{T},R}$ (see \eqref{zspace}) of the function
\begin{equation}
  \widetilde{\Phi}_{v_0}(v)(t)=e^{-it(-\Delta)^{\frac{s}{2}}}v_0+\int_0^t e^{-i(t-\tau)(-\Delta)^{\frac{s}{2}}}\mathcal{N}(|v(\tau)|)v(\tau)\,d\tau.  
\end{equation}
In other words, by uniqueness, given $v_0\in V$, there exists a unique solution $v\in C([-\widetilde{T},\widetilde{T}];H^J(\mathbb{T}^N))$ to the Cauchy problem \eqref{NLS} with the initial condition $v_0$. This shows the existence of the map data-to-solution from $V$ into the class $C([-\widetilde{T},\widetilde{T}];H^J(\mathbb{T}^N))$. 

To prove Lipschitz continuity, let $u_0,v_0\in V$, and $u,v\in C([-\widetilde{T},\widetilde{T}];H^J(\mathbb{T}^N))$ be the corresponding solutions to \eqref{NLS} with $u(0)=u_0$, $v(0)=v_0$. By \eqref{diffintegraleqCJ}, we get
\begin{equation*}
\begin{aligned}
 \sup\limits_{t\in [-\widetilde{T},\widetilde{T}]}\|u(t)-v(t)\|_{H^J}\leq \|u_0-v_0\|_{H^J}+c\,\widetilde{T}\mathcal{G}_2(2\eta,R,R)\Big(\sup\limits_{t\in [-\widetilde{T},\widetilde{T}]}\|u(t)-v(t)\|_{H^J}\Big),   
\end{aligned}    
\end{equation*}
where $\mathcal{G}_2$ is defined in \eqref{g_2}. Since $\widetilde{T}<T$, we have $c\,\widetilde{T}\mathcal{G}_2(2\eta,R,R)\leq c \,T\mathcal{G}_2(2\eta,R,R)\leq \frac{1}{2}$. Hence,  the previous inequality shows
\begin{equation*}
\begin{aligned}
 \sup\limits_{t\in [-\widetilde{T},\widetilde{T}]}\|u(t)-v(t)\|_{H^J}\leq 2\,\|u_0-v_0\|_{H^J},   
\end{aligned}    
\end{equation*}
which completes the justification of the continuous dependence property and finished the proof of Theorem \ref{mainTHM2}.

\section{Well-posedness results for series nonlinearities}\label{Seriesnonlinearity}

In this section, we give the proof of Theorem \ref{mainTHM1}, which shows the local well-posedness for the Cauchy problem \eqref{NLS} with the type (II) nonlinearity, given as a series. We first obtain preliminary estimates for the polynomial-type nonlinearities such as $|u|^\gamma u$, then prove Theorem \ref{mainTHM1}.

\subsection{Nonlinear estimates}

In the following lemma, we study the $H^{J}$-norm of $|u|^{\gamma}u$ and the difference $|u|^{\gamma}u-|v|^{\gamma}v$ (for $u,v \in H^{J}(\mathbb{T}^N)$ such that \eqref{nonvanish} holds for some $\eta>0$). A key observation here is that we require precise estimates of the constants in terms of $\gamma$. This turns out to be fundamental for studying the initial value problem \eqref{NLS} with infinitely many terms in the combined nonlinearity $\mathcal{N}(x)=\sum \limits_{k=0}^{\infty}a_k x^{\gamma_k}$, for arbitrary real-valued $\gamma_k$'s, similar to the case of the real line in \cite{RRR}.

\begin{lemma}\label{nonlinearestimates}
Let $J\in \mathbb{Z}^{+}$ be such that $J>\frac{N}{2}$. Let $u,v\in H^J(\T^N)$ be such that there exists $\eta>0$, for which \eqref{nonvanish} holds for both $u$ and $v$. Then for any given $\gamma\in\mathbb{R}$, we have 
\begin{equation}\label{nonestimates}
\begin{aligned}
 \norm{|u|^{\gamma}u}_{H^J} \le & \, \Gamma_1(\gamma,\eta,\|u\|_{H^J}), 
\end{aligned}
\end{equation}
and
\begin{equation}\label{diffnonlinear}
\norm{u|u|^{\gamma}-|v|^{\gamma}v}_{H^J}\leq \Gamma_2(\gamma,\eta,\|u\|_{H^J},\|v\|_{H^J})\|u-v\|_{H^J},
\end{equation}
where 
\begin{equation*}
\begin{aligned}
\Gamma_1(\gamma,\eta,\|u\|_{H^J}):=& c_0c_1^{|\gamma|}( \eta^{|\gamma|} + \norm{u}^{|\gamma|}_{H^J} )\norm{u}_{H^J}  \\
&+\sum_{0<|\beta|\leq J}\sum_{p=1}^{|\beta|}\sum_{l_1+\dots+l_p=\beta}c_{\beta} c_1^{|\gamma-2p|}|c_{l_1,\dots,l_p}| (\eta^{|\gamma -2p|}+\norm{u}_{H^J}^{|\gamma -2p|}) \norm{u}^{2p+1}_{H^J},
\end{aligned}    
\end{equation*}
and
\begin{equation*}
\begin{aligned}
\Gamma_2 (\gamma,\eta&,\|u\|_{H^J},\|v\|_{H^J})\\
:=& c_0c_1^{|\gamma-1|}|\gamma|(\eta^{|\gamma-1|}+\|u\|_{H^J}^{|\gamma-1|}+\|v\|_{H^J}^{|\gamma-1|})(\|u\|_{H^J}+\|v\|_{H^J})\\
&+c_0c_1^{|\gamma|}(\eta^{|\gamma|}+\|u\|_{H^J}^{|\gamma|}+\|v\|_{H^J}^{|\gamma|})\\
&+\sum_{0<|\beta|\leq J}\sum_{p=1}^{|\beta|}\sum_{l_1+\dots+l_p=\beta}c_{\beta}|c_{l_1,\dots,l_p}|\\
&\quad \times \Big( c_1^{|\gamma-2p-1|}|\gamma-2p|(\eta^{|\gamma-2p-1|}+\norm{u}^{|\gamma-2p-1|}_{H^J}+\norm{v}^{|\gamma-2p-1|}_{H^J} )(\|u\|_{H^J}^{2p+1}+\|v\|_{H^J}^{2p+1})\\
& \qquad + c_1^{|\gamma-2p|}(\eta^{|\gamma-2p|}+\norm{u}^{|\gamma-2p|}_{H^J}+\norm{v}^{|\gamma-2p|}_{H^J} )(\|u\|_{H^J}^{2p}+\|v\|_{H^J}^{2p})\Big),
\end{aligned}
\end{equation*}
with
\begin{align}\label{coefficientscond}
 |c_{l_1,\dots,l_p}| \le 2^{|\beta|} \underbrace{\big|(\gamma/2)(\gamma/2 -1)\dots(\gamma/2-(p-1)) \big|}_{p-\text{times}},
\end{align} 
 $c_1\geq 1$ is any fixed  constant such that $\|f\|_{L^{\infty}}\leq c_1\|f\|_{H^{\lfloor\frac{N}{2}\rfloor+1}}$, for all $f\in H^{\lfloor\frac{N}{2}\rfloor+1}(\mathbb{T}^N)$, $c_0>0$ depends on $J$, and $c_{\beta}>0$ depends on $\beta$, for each $0\leq |\beta|\leq J$.
\end{lemma}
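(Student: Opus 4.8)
The plan is to establish \eqref{nonestimates} and \eqref{diffnonlinear} by the same Leibniz-expansion strategy used in Lemma \ref{lemmanonlinCJ}, but now tracking the dependence of every constant on $\gamma$. The starting point is the explicit formula for the derivatives of $|u|^\gamma$: for a multi-index $\beta$ with $|\beta|\geq 1$, an induction (Fa\`a di Bruno type) gives
\begin{equation*}
\partial^{\beta}(|u|^{\gamma}) = \sum_{p=1}^{|\beta|}\sum_{\substack{l_1+\dots+l_p=\beta\\ |l_j|\geq 1}} c_{l_1,\dots,l_p}\,|u|^{\gamma-2p}\,\partial^{l_1}(|u|^2)\dots\partial^{l_p}(|u|^2),
\end{equation*}
which is the $\gamma$-analogue of \eqref{caseabsolform}; here the combinatorial coefficients $c_{l_1,\dots,l_p}$ absorb the falling factorial $(\gamma/2)(\gamma/2-1)\dots(\gamma/2-(p-1))$ coming from differentiating the power $(\,\cdot\,)^{\gamma/2}$ repeatedly, together with at most $2^{|\beta|}$ purely multi-index-combinatorial factors from distributing derivatives — this is exactly the bound \eqref{coefficientscond}. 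First I would prove this formula and the coefficient bound by induction on $|\beta|$, and then apply the Leibniz rule to $\partial^{\alpha}(|u|^\gamma u)$, splitting off the term $|u|^\gamma\partial^\alpha u$ and writing the remainder as a sum over $0<\beta\leq\alpha$ of $\partial^\beta(|u|^\gamma)\,\partial^{\alpha-\beta}u$, then expanding each $\partial^l(|u|^2)=\partial^l(u\bar u)$ by Leibniz.

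Next I would estimate the $L^2$-norm of each resulting monomial. For the main term, $\||u|^\gamma\partial^\alpha u\|_{L^2}\le \||u|^\gamma\|_{L^\infty}\|u\|_{H^J}$, and $\||u|^\gamma\|_{L^\infty}\le c_1^{|\gamma|}(\eta^{|\gamma|}+\|u\|_{H^J}^{|\gamma|})$ by \eqref{nonvanish} and \eqref{SobembInft} (splitting the cases $\gamma\le 0$ and $\gamma>0$ as in \eqref{nonlinear1}), which gives the first line of $\Gamma_1$. For the general term indexed by $\beta,p$ and the $m_j$'s, one pulls out $\||u|^{\gamma-2\sum m_j}\|_{L^\infty}\le c_1^{|\gamma-2p'|}(\eta^{|\gamma-2p'|}+\|u\|_{H^J}^{|\gamma-2p'|})$ with $p'=\sum_j m_j$, and then the product of the derivative factors $\partial^{l_{j,r,i}}u$ together with $\partial^{\alpha-\beta}u$ is bounded in $L^2$ by $c\|u\|_{H^J}^{2p'+1}$ — this is precisely the three-case argument (highest derivative $<$, $>$, or $=J-\tfrac N2$) already carried out in \eqref{Bestim2.0}--\eqref{Bestim2}, which I would cite rather than repeat. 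Here I would use $p'=\sum_j m_j\ge p$ together with $\sum m_j\le\sum|l_j|=|\beta|$ to line up the exponent $2p+1$ claimed in $\Gamma_1$ (one may simply bound $\|u\|_{H^J}^{2p'+1}$ by $\|u\|_{H^J}^{2p+1}$ after replacing $\|u\|_{H^J}$ by $\max(1,\|u\|_{H^J})$, or restate with $p'$; either is routine). Collecting over the finitely many $\alpha,\beta,p,\{l_j\}$ and using the triangle inequality over the equivalent $H^J$-norm \eqref{Integerder} yields \eqref{nonestimates}.

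For the difference estimate \eqref{diffnonlinear}, the plan is the telescoping decomposition exactly as in the proof of \eqref{differenceCnon}: write $\partial^\alpha(|u|^\gamma u)-\partial^\alpha(|v|^\gamma v)$ as a sum of terms in which exactly one factor is replaced by a difference. The new ingredients are two mean-value estimates: $\||u|^{\widetilde\gamma}-|v|^{\widetilde\gamma}\|_{L^\infty}\le c_1^{|\widetilde\gamma-1|}|\widetilde\gamma|(\eta^{|\widetilde\gamma-1|}+\|u\|_{H^J}^{|\widetilde\gamma-1|}+\|v\|_{H^J}^{|\widetilde\gamma-1|})\|u-v\|_{H^J}$ (this is \eqref{differengammaest}, applied with $\widetilde\gamma=\gamma$ for the $|u|^\gamma$ head and with $\widetilde\gamma=\gamma-2p$ for the interior powers), and the ordinary control $\|\partial^{\alpha-\beta}u-\partial^{\alpha-\beta}v\|_{L^2}\le\|u-v\|_{H^J}$ for the differentiated-factor terms, combined with the same three-case $L^2$ bound on the remaining product of derivatives. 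The crucial point — and what I expect to be the main bookkeeping obstacle — is keeping the $\gamma$-dependence \emph{explicit and of the stated form} throughout: every constant must be a product of a $\gamma$-free combinatorial factor (the $c_0$, $c_\beta$, $|c_{l_1,\dots,l_p}|$) times an explicit power $c_1^{|\gamma-(\cdot)|}$ and, for the difference, a linear factor $|\gamma|$ or $|\gamma-2p|$ coming from the mean value theorem. This matters because Theorem \ref{mainTHM1} sums these bounds over infinitely many $k$ with $\gamma=\gamma_k$, and convergence is exactly the hypothesis \eqref{convergenceeq}; so the proof must avoid any hidden $\gamma$-dependence in the ``universal'' constants and must track the interior exponents $|\gamma-2p|$ and $|\gamma-2p-1|$ separately, as reflected in the two summands inside the $\beta,p$-sum of $\Gamma_2$. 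Gathering all terms and using \eqref{Integerder} finishes the proof.
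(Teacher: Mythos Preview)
Your approach is essentially the paper's, but you have introduced an unnecessary detour that creates the very bookkeeping problem you flag. You correctly state the key identity
\[
\partial^{\beta}(|u|^{\gamma}) = \sum_{p=1}^{|\beta|}\sum_{\substack{l_1+\dots+l_p=\beta\\ |l_j|\geq 1}} c_{l_1,\dots,l_p}\,|u|^{\gamma-2p}\,\partial^{l_1}(|u|^2)\dots\partial^{l_p}(|u|^2),
\]
and you say you will expand each $\partial^{l_j}(|u|^2)=\partial^{l_j}(u\bar u)$ by Leibniz. If you actually do that, each $\partial^{l_j}(|u|^2)$ becomes a sum of products $\partial^{l_{j,1}}u\,\overline{\partial^{l_{j,2}}u}$ with $l_{j,1}+l_{j,2}=l_j$, and nothing else: no additional powers of $|u|$ appear, and the total number of derivative factors in each monomial is exactly $2p+1$ (the $2p$ halves plus $\partial^{\alpha-\beta}u$). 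The exponent on $|u|$ is exactly $\gamma-2p$, matching the stated $\Gamma_1$, $\Gamma_2$ and the coefficient bound \eqref{coefficientscond}. This is what the paper does.

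Your paragraph on the ``general term'' instead reverts to the two-step expansion of Lemma~\ref{lemmanonlinCJ}, where one first writes $\partial^{l_j}(|u|)$ via \eqref{caseabsolform} and picks up the extra factors $|u|^{1-2m_j}$, the indices $m_j$, and the triple-subscripted $l_{j,r,i}$. That is the route appropriate for a general $\mathcal N$, but for $\mathcal N(x)=x^{\gamma}$ it is redundant: the formula you already wrote down bypasses it. Going through the $m_j$'s forces the exponent $\gamma-2\sum_j m_j$ with $p'=\sum_j m_j\neq p$ in general, and the falling factorial becomes $\gamma(\gamma-1)\cdots(\gamma-p+1)$ rather than $(\gamma/2)(\gamma/2-1)\cdots(\gamma/2-(p-1))$, so neither the exponents nor the coefficient bound \eqref{coefficientscond} would match the statement. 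Your proposed fix (replace $\|u\|_{H^J}$ by $\max(1,\|u\|_{H^J})$, or restate with $p'$) is not needed and would in fact disturb the precise $\gamma$-tracking you correctly identify as essential for Theorem~\ref{mainTHM1}. Drop the $m_j$'s entirely, work directly with the displayed formula for $\partial^\beta(|u|^\gamma)$, and the exponents $2p+1$, $|\gamma-2p|$, $|\gamma-2p-1|$ fall out cleanly.
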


\begin{rem}
The proof of Lemma \ref{nonlinearestimates} follows the same strategy developed for the NLS equation on a real line in \cite{RRR}, however, here we deal with the periodic setting in any spatial dimension. Moreover, we observe that setting $\mathcal{N}(x)=x^{\gamma}$, one can deduce Lemma \ref{nonlinearestimates} as a consequence of Lemma \ref{lemmanonlinCJ}. However, in order to make the dependence of the constants in terms of $\gamma$ more transparent (for the purpose of the infinite sum), we provide the proof of Lemma \ref{nonlinearestimates} below.
\end{rem}

\begin{proof}[Proof of Lemma \ref{nonestimates}]

Let $0\leq |\alpha|\leq J$. By Leibniz rule, we have 
\begin{equation}\label{Leibniz}
    \partial^{\alpha}\left(|u|^{\gamma} u\right)=\sum_{\beta\leq \alpha}\binom{\alpha}{\beta}\partial^{\beta}(|u|^{\gamma})\partial^{\alpha-\beta}u.
\end{equation} 
When $|\beta|\geq 1$, the general derivative of $|u|^{\gamma}$ can be expressed as
\begin{equation}\label{jderiv}
\partial^{\beta} (|u|^{\gamma})=\sum^{|\beta|}_{p=1}|u|^{\gamma-2p}\sum_{\substack{l_1+...+l_p=\beta\\ l_j\geq 1, \, j=1,\dots,p}} c_{l_1,\dots,l_p}\partial^{l_1}(|u|^2)...\partial^{l_p}(|u|^2),
\end{equation} 
where the coefficients $c_{l_1,\dots,l_p}$ satisfy \eqref{coefficientscond}. We remark that an inductive argument on the size of the multi-index $|\beta|\geq 1$ yields \eqref{jderiv}. Thus, we can express \eqref{Leibniz} as
\begin{equation}\label{derivinterpre}
\begin{aligned}
\partial^{\alpha}(|u|^{\gamma}u)=&\underbrace{|u|^{\gamma} \partial^{\alpha} u}_{\mathcal{A}}\\
&+ \underbrace{\sum_{0<\beta\leq \alpha}\sum_{p=1}^{|\beta|}\sum_{\substack{l_1+\dots+l_p=\beta\\ l_j\geq 1, \, j=1,\dots,p}}\binom{\alpha}{\beta}c_{l_1,...,l_p} |u|^{\gamma-2p} \partial^{l_1}(|u|^2)...\partial^{l_p}(|u|^2)\partial^{\alpha-\beta}u}_{\mathcal{B}},
\end{aligned}
\end{equation}
where $c_{l_1,...,l_p}$ satisfies \eqref{coefficientscond}, and we assume that the empty sum is equal to zero, e.g., when $\alpha=0$, we have $\sum_{0<\beta\leq \alpha}(\cdots)=0$. Consequently, the deduction of  \eqref{nonestimates} is reduced to obtaining the control of the $L^2$-norm in terms of $\mathcal{A}$ and $\mathcal{B}$ above. By H\"older's inequality and \eqref{nonlinear1}, we have
\begin{equation}\label{decompB}
\begin{aligned}
\|\mathcal{A}\|_{L^2} \leq \norm{|u|^{\gamma}}_{L^\infty}\norm{\partial^{\alpha} u}_{L^2}\leq c^{|\gamma|}(\eta^{|\gamma|}+\|u\|_{H^{J}}^{|\gamma|})\|u\|_{H^J}.
\end{aligned}    
\end{equation}
Next, we observe that by Leibniz rule, $\mathcal{B}$ can be decomposed as the sum of terms of the form
\begin{equation*}
   |u|^{\gamma-2p}\partial^{l_{1,1}}u\overline{\partial^{l_{1,2}}u}\dots \partial^{l_{p,1}}u\overline{\partial^{l_{p,2}}u}\partial^{l_{p+1}}u, 
\end{equation*}
where given $0<\beta\leq \alpha$,
\begin{equation*}
\left\{\begin{aligned}
&1\leq p\leq |\beta|,\\
&l_{1,1}+l_{1,2}+\dots+l_{p,1}+l_{p,2}+l_{p+1}=\alpha.
\end{aligned}\right.
\end{equation*}
An application of \eqref{nonlinear1} allows us to deduce
\begin{equation}\label{prodnonl}
\begin{aligned}
\|   |u|^{\gamma-2p}&\partial^{l_{1,1}}u\overline{\partial^{l_{1,2}}u}\dots \partial^{l_{p,1}}u\overline{\partial^{l_{p,2}}u}\partial^{l_{p+1}}u\|_{L^2}\\
&\leq c^{|\gamma-2p|}(\eta^{|\gamma-2p|}+\|u\|^{|\gamma-2p|}_{H^J})\|\partial^{l_{1,1}}u\overline{\partial^{l_{1,2}}u}\dots \partial^{l_{p,1}}u\overline{\partial^{l_{p,2}}u}\partial^{l_{p+1}}u\|_{L^2}\\
&=: c^{|\gamma-2p|}(\eta^{|\gamma-2p|}+\|u\|^{|\gamma-2p|}_{H^J})\|\mathcal{B}_1\|_{L^2},
\end{aligned}    
\end{equation}
where the constant $c>0$ above depends only on the constant of Sobolev embedding $H^{\lfloor\frac{N}{2} \rfloor+1}(\mathbb{T}^N)\hookrightarrow L^{\infty}(\mathbb{T}^N)$. The estimate of $\mathcal{B}_1$ is similar to that of $\widetilde{\mathcal{B}}_{\beta,p}$ in \eqref{Bestim}. If the derivative of the highest order among $l_{1,1},l_{1,2},\dots,l_{p,1},l_{p,2},l_{p+1}$ has order less than $J-\frac{N}{2}$, the same ideas in \eqref{Bestim2.0} show
\begin{equation}\label{firstsecondcases}
 \begin{aligned}
   \|\mathcal{B}_1\|_{L^2}\leq c^{2p+1}\|u\|_{H^{J}}^{2p+1},  
 \end{aligned}   
\end{equation}
where the constant above depends on the embedding $H^{\lfloor\frac{N}{2} \rfloor+1}(\mathbb{T}^N)\hookrightarrow L^{\infty}(\mathbb{T}^N)$. Now, if the derivative of the highest order is equal to $J$, we have that $p=1$ in $\mathcal{B}_1$, and the estimate is similar to that in \eqref{intercase}. Now, we assume that the derivative of the highest order is  strictly greater than $J-\frac{N}{2}$ and strictly less than $J$. Thus, without loss of generality, we assume $J-\frac{N}{2}<|l_{p+1}|<J$, then $\sum_{k=1}^p|l_{k,1}|+|l_{k,2}|\leq J-|l_{p+1}|<\frac{N}{2}$. We apply  H\"older's inequality to obtain
\begin{equation}\label{upperboundB1}
\begin{aligned}
\|\mathcal{B}_1\|_{L^2}\leq \|\partial^{l_{1,1}}u\|_{L^{q}}\|\partial^{l_{1,2}}u\|_{L^q}\dots \|\partial^{l_{p,1}}u\|_{L^q}\|\partial^{l_{p,2}}u\|_{L^q}\|\partial^{l_{p+1}}u\|_{L^{q_1}}    
\end{aligned}    
\end{equation}
where
\begin{equation*}
 \left\{\begin{aligned}
 &\frac{1}{q_1}=\frac{1}{2}-\frac{(J-|l_{p+1}|)}{N},\\
 &\frac{1}{q}=\frac{1}{2}-\frac{r}{N},
 \end{aligned}\right.   
\end{equation*}
and
\begin{equation*}
   r=\frac{pN-(J-|l_{p+1}|)}{2p}. 
\end{equation*}
Using that $0<r<\frac{N}{2}$ and $\frac{2p}{q}+\frac{1}{q_1}=\frac{1}{2}$, we apply the Sobolev embedding \eqref{SobEmb} in the inequality \eqref{upperboundB1} to get 
\begin{equation}\label{thirdcase}
    \|\mathcal{B}_1\|_{L^2}\leq (\widetilde{c})^{2p+1}\|u\|_{H^J}^{2p+1},
\end{equation}
here, the constant $\widetilde{c}>0$ depends on the Sobolev embeddings $H^{r}(\mathbb{T}^N)\hookrightarrow L^{q}(\mathbb{T}^N)$ and $H^{J-|l_{p+1}|}(\mathbb{T}^N)\hookrightarrow L^{q_1}(\mathbb{T}^N)$. Finally, when the derivative of the highest order is equal to $J-\frac{N}{2}$, the arguments in the deduction of \eqref{Bestim2} yield the same estimate in \eqref{thirdcase} with the constant depending on the embedding $H^{r}(\mathbb{T}^N)\hookrightarrow L^{q}(\mathbb{T}^N)$ with $q=2(2p+1)$ and $r=\frac{2pN}{2(2p+1)}$.

Collecting the previous estimates, it follows 
\begin{equation*}
 \begin{aligned}
  \|\mathcal{B}\|_{L^2}\leq \sum_{0<\beta\leq \alpha}\sum_{p=1}^{|\beta|}\sum_{l_1+\dots+l_p=\beta}\binom{\alpha}{\beta} c_1^{|\gamma-2p|}(\widetilde{c})^{2p+1} |c_{l_1,...,l_p}|(\eta^{|\gamma-2p|}+\|u\|^{|\gamma-2p|}_{H^J})\|u\|_{H^J}^{2p+1},  
 \end{aligned}   
\end{equation*}
where $c_{l_1,...,l_p}$ satisfies \eqref{coefficientscond}, $c_1\geq 1$ is any fixed  constant such that $\|f\|_{L^{\infty}}\leq c_1\|f\|_{H^{\lfloor\frac{N}{2}\rfloor+1}}$, for all $f\in H^{\lfloor\frac{N}{2}\rfloor+1}(\mathbb{T}^N)$, and $\widetilde{c}>0$ is such that \eqref{firstsecondcases} and \eqref{thirdcase} hold. The previous bounds for $\mathcal{A}$ and $\mathcal{B}$ establish the desired estimate for $\partial^{\alpha}(|u|^{\gamma}u)$, and since $0\leq |\alpha| \leq J$ is arbitrary, we get \eqref{nonestimates}. 
\smallskip

Next, we deal with the difference $|u|^{\gamma}u-|v|^{\gamma}v$, where $u,v\in H^J(\mathbb{T}^N)$ and both satisfy \eqref{nonvanish}. Let $\alpha$ be a multi-index with $0\leq |\alpha| \leq J$. Using \eqref{derivinterpre} and regrouping terms, we rewrite  $\partial^{\alpha}(|u|^{\alpha}u)-\partial^{\alpha}(|v|^{\alpha}v)$ as a sum of the following factors
\begin{equation}
 \begin{aligned}
&\mathcal{D}_{0,1}:= (|u|^{\gamma}-|v|^{\gamma})\partial^{\alpha}u+|v|^{\gamma}(\partial^{\alpha}u-\partial^{\alpha}v),\\
&\mathcal{D}_{0,2}:=(|u|^{\gamma-2p}-|v|^{\gamma-2p})\partial^{l_1}(|u|^2)\dots\partial^{l_p}(|u|^2)\partial^{\alpha-\beta}u,\\
&\mathcal{D}_j:=|v|^{\gamma-2p}\partial^{l_1}(|v|^2)\dots \partial^{l_{j-1}}(|v|^2)\partial^{l_j}(|u|^2-|v|^2)\partial^{l_{j+1}}(|u|^2)\dots \partial^{l_p}(|u|^2)\partial^{\alpha-\beta}u,\\
&\mathcal{D}_{p+1}:=|v|^{\gamma-2p} \partial^{l_1}(|v|^2)\dots\partial^{l_p}(|v|^2)\partial^{\alpha-\beta}(u-v),
 \end{aligned}   
\end{equation}
where $0<\beta\leq \alpha$, $p=1,\dots,|\beta|$, $l_{1}+\dots+l_p=\beta$, $l_j\geq 1$, $j=1,\dots,p$. The estimate for $\mathcal{D}_{0,1}$, $\mathcal{D}_{0,2}$, $\mathcal{D}_{j}$ and $\mathcal{D}_{p+1}$ follows by similar arguments in \eqref{prodnonl}. We use \eqref{nonlinear1} and \eqref{differengammaest} to infer
\begin{equation*}
\begin{aligned}
   \|\mathcal{D}_{0,1}\|_{L^2}\leq &  c^{|\gamma-1|}|\gamma|(\eta^{|\gamma-1|}+\|u\|_{H^J}^{|\gamma-1|}+\|v\|_{H^J}^{|\gamma-1|})\|\partial^{\alpha}u\|_{L^2}\|u-v\|_{H^J}\\
   &+c^{|\gamma|}(\eta^{|\gamma|}+\|v\|_{H^J}^{|\gamma|})\|\partial^{\alpha}u-\partial^{\alpha}v\|_{L^2}\\
   \leq &  \Big(c^{|\gamma-1|}|\gamma|(\eta^{|\gamma-1|}+\|u\|_{H^J}^{|\gamma-1|}+\|v\|_{H^J}^{|\gamma-1|})(\|u\|_{H^J}+\|v\|_{H^J})\\
   &+c^{|\gamma|}(\eta^{|\gamma|}+\|u\|_{H^J}^{|\gamma|}+\|v\|_{H^J}^{|\gamma|})\Big)\|u-v\|_{H^J}.
\end{aligned}    
\end{equation*}
Using again \eqref{differengammaest}, we get
\begin{equation*}
\begin{aligned}
     \|\mathcal{D}_{0,2}\|_{L^2}\leq & c^{|\gamma-2p-1|}|\gamma-2p|\big(\eta^{|\gamma-2p-1|}+\|u\|_{H^J}^{|\gamma-2p-1|}+\|v\|_{H^J}^{|\gamma-2p-1|}\big)\\
     &\times \|\partial^{l_1}(|u|^2)\dots\partial^{l_p}(|u|^2)\partial^{\alpha-\beta}u\|_{L^2}\|u-v\|_{H^J}.
\end{aligned}   
\end{equation*}
The estimate of the $L^2$-norm of $\partial^{l_1}(|u|^2)\dots\partial^{l_p}(|u|^2)\partial^{\alpha-\beta}u$ follows by the same arguments in the estimate of $\|\mathcal{B}_1\|_{L^2}$ in \eqref{prodnonl} (see \eqref{firstsecondcases} and \eqref{thirdcase}), that is, we compare the derivative of the highest order with the number $J-\frac{N}{2}$ (note that by assumption $J>\frac{N}{2}$). In conclusion, we obtain 
\begin{equation*}
\begin{aligned}
     \|\mathcal{D}_{0,2}\|_{L^2}\leq & c^{|\gamma-2p-1|}(\widetilde{c})^{2p+1}|\gamma-2p|\big(\eta^{|\gamma-2p-1|}+\|u\|_{H^J}^{|\gamma-2p-1|}+\|v\|_{H^J}^{|\gamma-2p-1|}\big)\\
     &\times (\|u\|_{H^J}^{2p+1}+\|v\|_{H^J}^{2p+1})\|u-v\|_{H^J},
\end{aligned}   
\end{equation*}
for some constants $c, \widetilde{c}>0$ with the same dependence in \eqref{firstsecondcases} and \eqref{thirdcase}.
\smallskip

Now, let $j=1,\dots,p$. Applying \eqref{nonlinear1}, it follows
\begin{equation*}
\begin{aligned}
  \|\mathcal{D}_j\|_{L^2}+&\|\mathcal{D}_{p+1}\|_{L^2} \\
  \leq &\, c^{|\gamma-2p|}(\eta^{|\gamma-2p|}+\|u\|_{H^J}^{|\gamma-2p|}+\|v\|_{H^J}^{|\gamma-2p|})\\
  &\times\Big(\|\partial^{l_1}(|v|^2)\dots \partial^{l_{j-1}}(|v|^2)\partial^{l_j}(|u|^2-|v|^2)\partial^{l_{j+1}}(|u|^2)\dots \partial^{l_p}(|u|^2)\partial^{\alpha-\beta}u\|_{L^2}\\
  &\quad +\|\partial^{l_1}(|v|^2)\dots \partial^{l_p}(|v|^2)\partial^{\alpha-\beta}(u-v)\|_{L^2}\Big).
\end{aligned}    
\end{equation*}
By writing
\begin{equation*}
  \partial^{l_j}(|u|^2-|v|^2)= \partial^{l_j}((u-v)\overline{u})+\partial^{l_j}(v(\overline{u-v})),  
\end{equation*}
we argue as in \eqref{prodnonl}, using the ideas from \eqref{firstsecondcases} and \eqref{thirdcase}, to get
\begin{equation*}
\begin{aligned}
\|\mathcal{D}_j\|_{L^2}+\|\mathcal{D}_{p+1}\|_{L^2} \leq &c^{|\gamma-2p|}(\widetilde{c})^{2p+1}(\eta^{|\gamma-2p|}+\|u\|_{H^J}^{|\gamma-2p|}+\|v\|_{H^J}^{|\gamma-2p|})\\
  &\times(\|u\|_{H^J}^{2p}+\|v\|_{H^J}^{2p})\|u-v\|_{H^J}. 
\end{aligned}    
\end{equation*}
Collecting together the previous estimates, we complete the proof of \eqref{diffnonlinear}. 
\end{proof}


\subsection{Proof of Theorem \ref{mainTHM1}}
The proof of Theorem \ref{mainTHM1} follows similar strategy as in the deduction of Theorem \ref{mainTHM2}. We prove the existence and uniqueness of solutions for the Cauchy problem \eqref{NLS} with series nonlinearity, where we show the dependence of constants and convergence of the series determined by the nonlinearity. The continuous dependence is a consequence of our arguments in the proof of existence below, together with similar ideas to those given in the proof of Theorem \ref{mainTHM2}, so we omit that deduction.
\smallskip

\underline{Existence of Solutions}. We consider the integral formulation \eqref{IntegralEquation},  
where $\mathcal{N}(|u|)=\sum_{k=0}^{\infty}a_k|u|^{\gamma_k}$, and the coefficients $\{a_k\}$ satisfy \eqref{convergenceeq}. Let $u_0\in H^J(\mathbb{T}^N)$ and \eqref{nonvanish} for some $\eta>0$, and $J\in \mathbb{Z}^{+}$ with $J>\frac{N}{2}+s$. We consider the same complete space $\mathcal{Z}_{T,R}$ defined in \eqref{zspace} for some $T, R>0$ to be determined later. Since $\{e^{-it (-\Delta)^{\frac{s}{2}}}\}$ defines a group of isometries in $H^J(\mathbb{T}^N)$, we apply Lemma \ref{nonlinearestimates} to get
\begin{equation}\label{contraeq}
\begin{aligned}
\norm{\Psi(u)(t)}_{H^J}
&\le \norm{u_0}_{H^J} +\sum^\infty_{k=0}|a_k|\Big|\int_{0}^{t}\norm{|u(\tau)|^{\gamma_k}u(\tau)}_{H^J}\, d\tau \Big|\\
&\leq \|u_0\|_{H^J}+T\sum_{k=0}^{\infty} |a_k|\sup_{t\in[-T,T]}\Gamma_{1}(\gamma_k,2\eta,\|u(t)\|_{H^J})\\
&\leq \|u_0\|_{H^J}+T\sum_{k=0}^{\infty} |a_k|\,\Gamma_{1}(\gamma_k,2\eta,R),
\end{aligned}
\end{equation}
and
\begin{equation}\label{diffseries}
 \begin{aligned}
 \|\Psi(u)(t)-\Psi(v)(t)\|_{H^{J}}
 \leq &\sum_{k=0}^{\infty}|a_k|\Big|\int_{0}^{t}\||u(\tau)|^{\gamma_k}u(\tau)-|v(\tau)|^{\gamma_k}v(\tau)\|_{H^J}\, d\tau \Big|\\
 \leq & T\sum_{k=0}^{\infty}|a_k|\sup\limits_{t\in[-T,T]}\big(\Gamma_2(\gamma_k,2\eta,\|u(t)\|_{H^J},\|v(t)\|_{H^J})\|u(t)-v(t)\|_{H^J}\big)\\
 \leq & T\sum_{k=0}^{\infty}|a_k|\,\Gamma_2(\gamma_k,2\eta,R,R)\sup\limits_{t\in[-T,T]}\|u(t)-v(t)\|_{H^J},
 \end{aligned}
\end{equation}
where $\Gamma_1(\cdot,\cdot,\cdot)$ and  $\Gamma_2(\cdot,\cdot,\cdot,\cdot)$ are defined in \eqref{nonestimates} and \eqref{diffnonlinear}, respectively. 

Next, we check the infimum condition in \eqref{zspace} for $\Psi(u)$. This follows from the same strategy used in the proof  of Theorem \ref{mainTHM2}. We apply \eqref{nonvanish} and the linear estimates obtained from \eqref{integraldiff} and deduce
\begin{equation}\label{infeq}
\begin{aligned}
|\Psi(u)(t)|\geq |u_0|-c\,T\|u_0\|_{H^J}-c\,T\sum_{k=0}^{\infty} |a_k|\Gamma_{1}(\gamma_k,2\eta,R)\\
\geq \eta^{-1}-c\,T\|u_0\|_{H^J}-c\,T\sum_{k=0}^{\infty} |a_k|\Gamma_{1}(\gamma_k,2\eta,R).
\end{aligned}    
\end{equation}
Note that the condition \eqref{convergenceeq} ensures that 
\begin{equation}\label{condcoeff}
  \sum_{k=0}^{\infty} |a_k|\Gamma_{1}(\gamma_k,2\eta,R)<\infty,
\end{equation}
\begin{equation}
\sum_{k=0}^{\infty}|a_k|\Gamma_2(\gamma_k,2\eta,R,R)<\infty.
\end{equation}
Thus, let $R=2\norm{u_0}_{H^J}$. We take $T>0$ sufficiently small such that 

\begin{equation}\label{condcontrac2}
\left\{ \begin{aligned}
   &T\sum_{k=0}^{\infty} |a_k|\Gamma_{1}(\gamma_k,2\eta,R) \leq \frac{R}{2}, \\ &T\sum_{k=0}^{\infty}|a_k|\Gamma_2(\gamma_k,2\eta,R,R)\leq \frac{1}{2},\\
   &T\left( \frac{R}{2}+c\Gamma_1(2\eta,R)R \right)\leq \frac{1}{2}\eta^{-1}.
 \end{aligned}  \right. 
\end{equation}
As a consequence of the estimates \eqref{contraeq} and \eqref{infeq}, we find that for a finite time $T$, $\Psi(u)$ defines a mapping from $\mathcal Z_{T,R}$ to itself. From \eqref{diffseries} we conclude that $\Psi(u)$ is a contraction mapping. Collecting these estimates, for some time $T>0$ such that \eqref{condcontrac2} holds, $\Psi(u)$ yields a fixed-point solution. This completes the proof of existence.
\smallskip

\underline{Uniqueness of solutions}. Let $u, v\in C([-T,T];H^J(\mathbb{T}^N))$ be two solutions of \eqref{NLS} with the same initial condition, i.e., $u(0,x)=v(0,x)$ satisfying \eqref{uniquenessinfcond} for some $\eta>0$. We also consider $M>0$ fixed such that \eqref{Kbound} holds.  \\

Notice that $w=u-v$ solves the initial value problem \eqref{uniqueeq}. Thus, multiplying the equation in \eqref{uniqueeq} by $\overline{w}$, then integrating over $\mathbb{T}^N$, and taking the imaginary part of the resulting expression, we deduce
\begin{equation*}
\begin{aligned}
 \frac{1}{2}\frac{d}{dt}\int_{\mathbb{T}^N} |w|^2\, dx=-\Im \int_{\mathbb{T}^N}\big(\mathcal{N}(|u|)u-\mathcal{N}(|v|)v\big)\overline{w}\, dx \\
   \leq \sum_{k=0}^{\infty}|a_k|\int_{\mathbb{T}^N}\big||u|^{\gamma_k}u-|v|^{\gamma_k}v\big||w|\, dx.   
\end{aligned}
\end{equation*}
To estimate the right-hand side of the expression above, we write $|u|^{\gamma_k}u-|v|^{\gamma_k}v=|u|^{\gamma_k}w+(|u|^{\gamma_k}-|v|^{\gamma_k})v$ to find
\begin{equation*}
 \begin{aligned}
 \int_{\mathbb{T}^N}\big||u|^{\gamma_k}u-|v|^{\gamma_k}v\big||w|\, dx\leq  \int_{\mathbb{T}^N}|u|^{\gamma_k}|w|^2\, dx+ \int_{\mathbb{T}^N}\big||u|^{\gamma_k}-|v|^{\gamma_k}\big||v||w|\, dx.    
 \end{aligned}   
\end{equation*}
Let us estimate the right-hand side of the above inequality. Given the validity of \eqref{uniquenessinfcond}, we can use \eqref{nonlinear1} to get
\begin{equation*}
\begin{aligned}
\int_{\mathbb{T}^N}|u|^{\gamma_k}|w|^2\, dx\leq & c^{|\gamma_k|}\big(\eta^{|\gamma_k|}+\|u\|_{H^J}^{|\gamma_k|}\big)\int_{\mathbb{T}^N} |w|^2\, dx\\
\leq & c^{|\gamma_k|}\big(\eta^{|\gamma_k|}+M^{|\gamma_k|}\big)\int_{\mathbb{T}^N} |w|^2\, dx,
\end{aligned}    
\end{equation*}
and we use \eqref{differengammaest} to infer
\begin{equation*}
 \begin{aligned}
 \int_{\mathbb{T}^N} &\big||u|^{\gamma_k}-|v|^{\gamma_k}\big||v||w|\, dx\\
 &\leq c^{|\gamma_k-1|+1}|\gamma_k|\big(\eta^{|\gamma_k-1|}+\|u\|_{H^J}^{|\gamma_k-1|}+\|v\|_{H^J}^{|\gamma_k-1|}\big)  \|v\|_{H^J}\int_{\mathbb{T}^N} |w|^2\, dx \\
 &\leq c^{|\gamma_k-1|+1}|\gamma_k|\big(\eta^{|\gamma_k-1|}+2M^{|\gamma_k-1|}\big) M\int_{\mathbb{T}^N} |w|^2\, dx.  
 \end{aligned}   
\end{equation*}
We remark that the above estimates hold for some universal constant $c>0$. Collecting the previous estimates, we arrive at
\begin{equation}\label{diffeesti}
  \frac{1}{2}\frac{d}{dt}\int_{\mathbb{T}^N}|w|^2\, dx\leq \Gamma_3(\eta,M)\int_{\mathbb{T}^N}|w|^2\, dx,  
\end{equation}
where
\begin{equation*}
 \begin{aligned}
\Gamma_3(\eta,M)\stackrel{def}{=}\sum_{k=0}^{\infty}|a_k|\Big(c^{|\gamma_k|}(\eta^{|\gamma_k|}+M^{|\gamma_k|})+ c^{|\gamma_k-1|+1}|\gamma_k|\big(\eta^{|\gamma_k-1|}+2M^{|\gamma_k-1|}\big) M\Big)<\infty,  
 \end{aligned}   
\end{equation*}
and the above expression is finite by condition \eqref{convergenceeq}. At this point, Gronwall's inequality and \eqref{diffeesti} yield the desired uniqueness property $w=0$, i.e., $u=v$, which concludes the proof of Theorem \ref{mainTHM1}.


\bibliographystyle{acm} 
\bibliography{references}

\begin{thebibliography}{10}

\bibitem{AroraRianoRoudenko2022}
{\sc Arora, A.~K., Ria\~{n}o, O., and Roudenko, S.}
\newblock Well-posedness in weighted spaces for the generalized {H}artree
  equation with {$p<2$}.
\newblock {\em Commun. Contemp. Math. 24}, 9 (2022), Paper No. 2150074.

\bibitem{BFG2023}
{\sc Bellazzini, J., Forcella, L., and Georgiev, V.}
\newblock Ground state energy threshold and blow-up for {NLS} with competing
  nonlinearities.
\newblock {\em Ann. Sc. Norm. Super. Pisa Cl. Sci. (5) 24}, 2 (2023), 955--988.

\bibitem{ArpadTadahiro2013}
{\sc B\'{e}nyi, A., and Oh, T.}
\newblock The {S}obolev inequality on the torus revisited.
\newblock {\em Publ. Math. Debrecen 83}, 3 (2013), 359--374.

\bibitem{B1998}
{\sc Berg\'e, L.}
\newblock Wave collapse in physics: principles and applications to light and
  plasma waves.
\newblock {\em Phys. Rep. 303}, 5-6 (1998), 259--370.

\bibitem{bm1976}
{\sc Bia{\l}ynicki-Birula, I., and Mycielski, J.}
\newblock Nonlinear wave mechanics.
\newblock {\em Ann. Physics 100}, 1-2 (1976), 62--93.

\bibitem{B1993}
{\sc Bourgain, J.}
\newblock Fourier transform restriction phenomena for certain lattice subsets
  and applications to nonlinear evolution equations. {I}. {S}chr\"odinger
  equations.
\newblock {\em Geom. Funct. Anal. 3}, 2 (1993), 107--156.

\bibitem{Carles2018}
{\sc Carles, R., and Gallagher, I.}
\newblock Universal dynamics for the defocusing logarithmic {S}chr\"odinger
  equation.
\newblock {\em Duke Math. J. 167}, 9 (2018), 1761--1801.

\bibitem{CS2021}
{\sc Carles, R., and Sparber, C.}
\newblock Orbital stability vs. scattering in the cubic-quintic
  {S}chr\"{o}dinger equation.
\newblock {\em Rev. Math. Phys. 33}, 3 (2021), Paper No. 2150004, 27.

\bibitem{Caz1983}
{\sc Cazenave, T.}
\newblock Stable solutions of the logarithmic {S}chr\"odinger equation.
\newblock {\em Nonlinear Anal. 7}, 10 (1983), 1127--1140.

\bibitem{CazHauNaum2020}
{\sc Cazenave, T., Han, Z., and Naumkin, I.}
\newblock Asymptotic behavior for a dissipative nonlinear {S}chr\"{o}dinger
  equation.
\newblock {\em Nonlinear Anal. 205\/} (2021), Paper No. 112243.

\bibitem{CH1980}
{\sc Cazenave, T., and Haraux, A.}
\newblock {\'E}quations d'\'evolution avec non lin\'earit\'e{} logarithmique.
\newblock {\em Ann. Fac. Sci. Toulouse Math. (5) 2}, 1 (1980), 21--51.

\bibitem{CazNaum2016}
{\sc Cazenave, T., and Naumkin, I.}
\newblock {Local existence, global existence, and scattering for the nonlinear
  {S}chr\"odinger equation}.
\newblock {\em Comm. Contemp. Math. 19}, 02 (2017), 1650038, 20 pp.

\bibitem{CazNaum2018}
{\sc Cazenave, T., and Naumkin, I.}
\newblock Modified scattering for the critical nonlinear {S}chr\"{o}dinger
  equation.
\newblock {\em J. Funct. Anal. 274}, 2 (2018), 402--432.

\bibitem{CHKL2015}
{\sc Cho, Y., Hwang, G., Kwon, S., and Lee, S.}
\newblock Well-posedness and ill-posedness for the cubic fractional
  {S}chr\"{o}dinger equations.
\newblock {\em Discrete Contin. Dyn. Syst. 35}, 7 (2015), 2863--2880.

\bibitem{dAMS2014}
{\sc d'Avenia, P., Montefusco, E., and Squassina, M.}
\newblock On the logarithmic {S}chr\"odinger equation.
\newblock {\em Commun. Contemp. Math. 16}, 2 (2014), 1350032, 15.

\bibitem{DET2016}
{\sc Demirbas, S., Erdo\u{g}an, M.~B., and Tzirakis, N.}
\newblock Existence and uniqueness theory for the fractional {S}chr\"{o}dinger
  equation on the torus.
\newblock In {\em Some topics in harmonic analysis and applications}, vol.~34
  of {\em Adv. Lect. Math. (ALM)}. Int. Press, Somerville, MA, 2016,
  pp.~145--162.

\bibitem{EGT2019}
{\sc Erdo\u{g}an, M.~B., G\"urel, T.~B., and Tzirakis, N.}
\newblock Smoothing for the fractional {S}chr\"odinger equation on the torus
  and the real line.
\newblock {\em Indiana Univ. Math. J. 68}, 2 (2019), 369--392.

\bibitem{EZ2008}
{\sc Erdo\u{g}an, M.~B., and Zharnitsky, V.}
\newblock Quasi-linear dynamics in nonlinear {S}chr\"odinger equation with
  periodic boundary conditions.
\newblock {\em Comm. Math. Phys. 281}, 3 (2008), 655--673.

\bibitem{FP2000}
{\sc Fibich, G., and Papanicolaou, G.}
\newblock Self-focusing in the perturbed and unperturbed nonlinear
  {S}chr\"odinger equation in critical dimension.
\newblock {\em SIAM J. Appl. Math. 60}, 1 (2000), 183--240.

\bibitem{FRRSY}
{\sc Friedman, I., Ria\~{n}o, O., Roudenko, S., Son, D., and Yang, K.}
\newblock {W}ell-posedness and dynamics of solutions to the generalized {K}d{V}
  with low power nonlinearity.
\newblock {\em Nonlinearity 36}, 1 (2023), 584--635.

\bibitem{PhysRevE1997}
{\sc Gaididei, Y.~B., Mingaleev, S.~F., Christiansen, P.~L., and Rasmussen,
  K.~O.}
\newblock Effects of nonlocal dispersive interactions on self-trapping
  excitations.
\newblock {\em Phys. Rev. E 55\/} (May 1997), 6141--6150.

\bibitem{Gi1996}
{\sc Ginibre, J.}
\newblock Le probl\`eme de {C}auchy pour des {EDP} semi-lin\'eaires
  p\'eriodiques en variables d'espace (d'apr\`es {B}ourgain).
\newblock No.~237. 1996, pp.~Exp. No. 796, 4, 163--187.
\newblock S\'eminaire Bourbaki, Vol.\ 1994/95.

\bibitem{GuoBolingYong2008}
{\sc Guo, B., Han, Y., and Xin, J.}
\newblock Existence of the global smooth solution to the period boundary value
  problem of fractional nonlinear {S}chr\"{o}dinger equation.
\newblock {\em Appl. Math. Comput. 204}, 1 (2008), 468--477.

\bibitem{HS2021}
{\sc Hakkaev, S., and Stefanov, A.~G.}
\newblock Stability of periodic waves for the fractional {K}d{V} and {NLS}
  equations.
\newblock {\em Proc. Roy. Soc. Edinburgh Sect. A 151}, 4 (2021), 1171--1203.

\bibitem{IP2014}
{\sc Ionescu, A.~D., and Pusateri, F.}
\newblock Nonlinear fractional {S}chr\"{o}dinger equations in one dimension.
\newblock {\em J. Funct. Anal. 266}, 1 (2014), 139--176.

\bibitem{KenigPonceVega1996}
{\sc Kenig, C.~E., Ponce, G., and Vega, L.}
\newblock Quadratic forms for the {$1$}-{D} semilinear {S}chr\"odinger
  equation.
\newblock {\em Trans. Amer. Math. Soc. 348}, 8 (1996), 3323--3353.

\bibitem{KLS2013}
{\sc Kirkpatrick, K., Lenzmann, E., and Staffilani, G.}
\newblock On the continuum limit for discrete {NLS} with long-range lattice
  interactions.
\newblock {\em Comm. Math. Phys. 317}, 3 (2013), 563--591.

\bibitem{LamLippmanTappert1977}
{\sc Lam, J.~F., Lippman, B., and Tappert, F.}
\newblock Self-trapped laser beams in plasma.
\newblock {\em Phys. Fluid 20}, 7 (1977), 1176--1179.

\bibitem{Laskin2000}
{\sc Laskin, N.}
\newblock Fractional quantum mechanics and {L}\'{e}vy path integrals.
\newblock {\em Phys. Lett. A 268}, 4-6 (2000), 298--305.

\bibitem{Laskin2002}
{\sc Laskin, N.}
\newblock Fractional {S}chr\"{o}dinger equation.
\newblock {\em Phys. Rev. E (3) 66}, 5 (2002), 056108.

\bibitem{LinaresMiyazakiPonce2019}
{\sc Linares, F., Miyazaki, H., and Ponce, G.}
\newblock On a class of solutions to the generalized {K}d{V} type equation.
\newblock {\em Commun. Contemp. Math. 21}, 7 (2019), 1850056.

\bibitem{LinaresPonce2015}
{\sc Linares, F., and Ponce, G.}
\newblock {\em Introduction to {N}onlinear {D}ispersive {E}quations},
  {S}econd~ed.
\newblock Universitext. Springer, New York, 2015.

\bibitem{LinaresPonceGleison2019}
{\sc Linares, F., Ponce, G., and Santos, G.~N.}
\newblock On a class of solutions to the generalized derivative
  {S}chr\"{o}dinger equations.
\newblock {\em Acta Math. Sin. (Engl. Ser.) 35}, 6 (2019), 1057--1073.

\bibitem{LinaresPonceGleison2019II}
{\sc Linares, F., Ponce, G., and Santos, G.~N.}
\newblock On a class of solutions to the generalized derivative
  {S}chr\"{o}dinger equations {II}.
\newblock {\em J. Differential Equations 267}, 1 (2019), 97--118.

\bibitem{MA1981}
{\sc Ma, Y.~C., and Ablowitz, M.~J.}
\newblock The periodic cubic {S}chr\"odinger equation.
\newblock {\em Stud. Appl. Math. 65}, 2 (1981), 113--158.

\bibitem{M2019}
{\sc Malomed, B.~A.}
\newblock {V}ortex solitons: {O}ld results and new perspectives.
\newblock {\em Phys. D 399\/} (2019), 108--137.

\bibitem{DNA1999}
{\sc Mingaleev, S., Christiansen, P., Gaididei, Y., Johannson, M., and
  Rasmussen, K.}
\newblock Models for energy and charge transport and storage in biomolecules.
\newblock {\em J. Biol. Phys. 25\/} (1999), 41--63.

\bibitem{Miyazaki2020}
{\sc Miyazaki, H.}
\newblock Lower bound for the lifespan of solutions to the generalized {K}d{V}
  equation with low degree of nonlinearity.
\newblock {\em Adv. Stud. Pure Math. 85\/} (2020), 303--313.

\bibitem{M2009}
{\sc Molinet, L.}
\newblock On ill-posedness for the one-dimensional periodic cubic {S}chrodinger
  equation.
\newblock {\em Math. Res. Lett. 16}, 1 (2009), 111--120.

\bibitem{RRR}
{\sc Rodriguez, A.~D., Ria\~{n}o, O., and Roudenko, S.}
\newblock Nonlinear {S}chr\"odinger equation with combined nonlinearities.
\newblock {\em preprint\/}.

\bibitem{SRaymond1991}
{\sc Saint~Raymond, X.}
\newblock {\em Elementary introduction to the theory of pseudodifferential
  operators}.
\newblock Studies in Advanced Mathematics. CRC Press, Boca Raton, FL, 1991.

\bibitem{S1997}
{\sc Staffilani, G.}
\newblock On the growth of high {S}obolev norms of solutions for {K}d{V} and
  {S}chr\"odinger equations.
\newblock {\em Duke Math. J. 86}, 1 (1997), 109--142.

\bibitem{SulemSulem1999}
{\sc Sulem, C., and Sulem, P.-L.}
\newblock {\em The nonlinear {S}chr\"odinger equation: Self-focusing and wave
  collapse}, vol.~139 of {\em Applied Mathematical Sciences}.
\newblock Springer-Verlag, New York, 1999.

\bibitem{Tao2006}
{\sc Tao, T.}
\newblock {\em Nonlinear dispersive equations: Local and global analysis},
  vol.~106 of {\em CBMS Regional Conference Series in Mathematics}.
\newblock American Mathematical Society, Providence, RI, 2006.

\bibitem{TVZ2007}
{\sc Tao, T., Visan, M., and Zhang, X.}
\newblock The nonlinear {S}chr\"odinger equation with combined power-type
  nonlinearities.
\newblock {\em Comm. Partial Differential Equations 32}, 7-9 (2007),
  1281--1343.

\bibitem{Thirouin2017}
{\sc Thirouin, J.}
\newblock On the growth of {S}obolev norms of solutions of the fractional
  defocusing {NLS} equation on the circle.
\newblock {\em Ann. Inst. H. Poincar\'{e} C Anal. Non Lin\'{e}aire 34}, 2
  (2017), 509--531.

\bibitem{Z1967}
{\sc Zakharov, V.}
\newblock Instability of self-focusing of light.
\newblock {\em Zh. Eksp. Teor. Fiz. 53\/} (1967), 1735--1743 [Sov. Phys. JETP,
  26, (1968), no. 5, 994--998].

\end{thebibliography}
\end{document}